\newcommand{\om}{\Omega}
\newcommand{\R}{\mathbb{R}}
\newcommand{\ep}{\varepsilon}
\newcommand{\q}{{\mathbf q}}
\newcommand{\be}{\begin{eqnarray}}
\newcommand{\ee}{\end{eqnarray}}
\renewcommand{\leq}{\leqslant}
\renewcommand{\geq}{\geqslant}
\newcommand{\supp}{{\rm supp}\,}
\newcommand{\half}{\frac{1}{2}}
\newcommand{\1}{{\mathbf 1}}
\newcommand{\weak}{\rightharpoonup}
\theoremstyle{definition}\newtheorem{defn}{Definition}
\theoremstyle{definition}
\theoremstyle{plain}\newtheorem{thm}{Theorem}
\theoremstyle{plain}
\theoremstyle{plain}\newtheorem{lemma}[thm]{Lemma}
\theoremstyle{plain}
\theoremstyle{remark}
\theoremstyle{remark}
\theoremstyle{definition}
\theoremstyle{definition}
\theoremstyle{remark}
\theoremstyle{remark}\newtheorem{remark}{Remark}
\theoremstyle{remark}
\theoremstyle{remark}
\theoremstyle{plain}
\theoremstyle{plain}
\theoremstyle{remark}
\numberwithin{equation}{section}
\numberwithin{defn}{section}
\numberwithin{problem}{section}
\numberwithin{example}{section}
\numberwithin{figure}{section}
\title{Remarks on the linear wave equation}
\author{ John M. Ball}
\address{Heriot-Watt University and Maxwell Institute for Mathematical Sciences,  Edinburgh}
\address{Hong Kong Institute for Advanced Study}
\date{\today}
\begin{document}

\maketitle

\centerline{\it Dedicated to Bob Pego in admiration.}
\begin{abstract}
We make some remarks on the linear wave equation concerning the existence and uniqueness of weak solutions, satisfaction of the energy equation, growth properties of solutions, the passage from bounded to unbounded domains, and reconciliation of different representations of solutions.
\end{abstract}
 
\section{Introduction}
\label{intro} 
Let $\om\subset\R^n$ be open with boundary $\partial\om$. In this paper we consider the linear wave equation 
\be
\label{wave}
u_{tt}=\Delta u,
\ee
for $u=u(x,t)$, $x\in\om$, $t\geq 0$, with boundary condition
\be
\label{bc}
\left.u\right|_{\partial\om}=0,
\ee
(interpreted appropriately if $\om$ is unbounded)
and initial conditions
\be
\label{ics}
u(x,0)=u_0(x),\, u_t(x,0)=v_0(x),
\ee
where $u_0\in H_0^1(\om),\,v_0\in L^2(\om)$.  Our aim is to make some remarks which are hard to find in the literature, although largely implicit in it, concerning the existence and uniqueness of weak  solutions to \eqref{wave}-\eqref{ics}, satisfaction of the energy equation
\be
\label{energy1}
 \half\int_\Omega\left (|\nabla u(x,t)|^2+u_t(x,t)^2\right)\,dx=\half\int_\Omega\left (|\nabla u_0(x)|^2+v_0(x)^2\right)\,dx,
\ee
growth properties of solutions, the passage from bounded to unbounded domains, and the reconciliation of different representations of solutions.
Although these remarks are perhaps   original only in various details, it is hoped that   readers may find their combination useful.

If $\om$ is bounded, perhaps the easiest method for proving existence and uniqueness is to represent $u$ as a Fourier expansion in  the eigenfunctions $\omega_j\in H_0^1(\Omega)$ of $-\Delta$, with corresponding real eigenvalues $\lambda_j>0$,  in  terms of which
\be
\label{fourier}
u(x,t)=\sum_{j=1}^\infty \left(u_{0j}\cos(\sqrt{\lambda_j}t)+v_{0j}\frac{\sin (\sqrt{\lambda_j} t)}{\sqrt{\lambda_j}}\right)\omega_j,
\ee
where $u_{0j}=(u_0,\omega_j), v_{0j}=(v_0,\omega_j)$ and $(\cdot,\cdot)$ denotes the inner product in $L^2(\om)$.

 In the case $\om=\R^n$ the solution  can be given explicitly in terms of the Fourier transforms $\hat u_0,\,\hat v_0$ of the initial data as
\be
\label{FT}
u(x,t)=\frac{1}{(2\pi)^{n/2}}\int_{\R^n}\left(\hat u_0(\xi)\cos(|\xi|t)+\hat v_0(\xi)\frac{\sin(|\xi|t)}{|\xi|}\right) e^{ix\cdot\xi}\,d\xi.
\ee 
Alternatively one can use Poisson's method of spherical means (see, for example, \cite{john1982}), which in the case $n=3$ leads to Kirchhoff's solution 
\be
\label{kirchhoff} u(x,t)=\frac{1}{4\pi t}\int_{S(x,t)}v_0(y)\,dS_y+\frac{\partial}{\partial t}\left(\frac{1}{4\pi t}\int_{S(x,t)}u_0(y)\,dS_y\right),
\ee
where $S(x,t)=\{y\in\R^3:|y-x|=t\}$ and $S_y$ is the usual $(n-1)$-dimensional surface measure.
Yet another method is to represent the solution as a superposition of plane waves via the Radon transform \cite{helgason1999,lax2006}.

For $\om$ a general (possibly unbounded) open set these methods do not apply, and  a natural approach that we review in Section \ref{existence} is via the Hille-Yosida theorem. In Section \ref{growth} we discuss the growth in time of the $L^2$ norm of solutions when $\om$  is unbounded. In order to make the connection with weak solutions  in the sense of distributions, and thus to establish uniqueness of weak solutions, it is convenient to calculate the adjoint of the wave operator, which we do in Section \ref{adjointsec}. We show that the adjoint is injective and that there are no nontrivial linear constants of motion. The use of the phase space $H=H_0^1(\om)\times L^2(\om)$ for unbounded domains $\om$ is motivated by taking a limit of the semiflows corresponding to an increasing sequence of bounded open sets $\om_j$ with union $\om$ (Theorem \ref{sgconverge}).

It is not completely obvious how to reconcile the different representations of solutions described above. We explore this for Kirchhoff's solution in Section \ref{kirchhoff1}, showing that the derivation of Kirchhoff's solution implies smoothing properties under the taking of  averages  over spheres and balls that are familiar in harmonic analysis. We illustrate the related harmonic analysis methods by showing (Theorem \ref{ballthm}) that if $B=B(0,1)$ denotes the unit ball in $\R^n$ then for $f\in L^2(\R^n)$ and $t>0$ the average 
\be
\label{ballav}
{\mathcal N}_t(f)(x)=\int_Bf(x+tz)\,dz
\ee
belongs to $H^\frac{n+1}{2}(\R^n)$.

\section{Existence and uniqueness via the Hille-Yosida theorem}
\label{existence}
Let $H=H_0^1(\Omega)\times L^2(\Omega)$. $H$ is a Hilbert space with inner product
$$\langle\left(\begin{array}{ll}u\\v\end{array}\right),\left(\begin{array}{ll}p\\q\end{array}\right)\rangle=\int_\Omega (up+\nabla u\cdot\nabla p+vq)\,dx.$$
We briefly review how, for a general (possibly unbounded) domain $\om\subset\R^n$, it can be proved  using  the Hille-Yosida theorem that \eqref{wave}-\eqref{ics} generates a semiflow on $H$, and that the energy equation \eqref{energy1} holds. For equivalent treatments  see \cite[Chapter 7]{arendt2011}, \cite[Chapter IV]{fattorini}, both using cosine families, and for the case of bounded domains \cite[p444]{evanspde}. We write \eqref{wave} in the form
\be
\label{waveabstract}
\dot w=Aw,
\ee
where $v=u_t$, $w=\left(\begin{array}{ll}u\\v\end{array}\right)$ and
\be
\label{Adef}
A=\left(\begin{array}{ll}0&\1\\ \Delta &0\end{array}\right).
\ee

 We regard $A$ as an unbounded linear operator on $H$ with domain
\be
\label{domainA}
D(A)= \{\left(\begin{array}{l}u\\ v\end{array}\right)\in H:\Delta u\in L^2(\Omega), v\in H_0^1(\Omega)\}.
\ee
Then $D(A)$ is dense in $H$ and it is easily checked that $A:D(A)\to H$ is closed. We apply the following special case of the Hille-Yosida theorem (see, for example, \cite[Corollary 3.8]{pazy83}, \cite[p441]{evanspde}):
\begin{thm}
\label{HY}
A closed densely defined linear operator $A$ on a real Banach space $X$  is the generator of a $C^0$-semigroup $\{T(t)\}_{t\geq 0}$   satisfying for some $\omega\in\R$
$$\|T(t)\|\leq e^{\omega t}, \;t\geq 0,$$
if and only if $(\omega,\infty)\subset\rho(A)$ and $\|R_\lambda\|\leq \frac{1}{\lambda-\omega} \text{ for }\lambda>\omega.$
\end{thm}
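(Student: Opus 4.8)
The plan is to prove what is the classical Hille--Yosida generation theorem by the Yosida approximation method, after first normalising away the constant $\omega$. Setting $B = A-\omega I$, one has $(\lambda I-B)^{-1}=R_{\lambda+\omega}$, so the resolvent hypothesis becomes $(0,\infty)\subset\rho(B)$ with $\norm{(\lambda I-B)^{-1}}\leq 1/\lambda$ for $\lambda>0$; moreover $A$ generates $\{T(t)\}$ with $\norm{T(t)}\leq e^{\omega t}$ if and only if $B$ generates the contraction semigroup $S(t)=e^{-\omega t}T(t)$. Thus it suffices to treat $\omega=0$, i.e.\ to characterise generators of contraction semigroups, everything below being real-linear so that the real Banach space setting causes no difficulty.

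Necessity ($\Rightarrow$) is the easy direction. If $B$ generates a contraction semigroup $\{S(t)\}$, I would define, for $\lambda>0$,
\[
R_\lambda x=\int_0^\infty e^{-\lambda t}S(t)x\,\d t,
\]
the integral converging absolutely since $\norm{S(t)}\leq 1$. Differentiating $S(t)x$ and integrating by parts shows $(\lambda I-B)R_\lambda=I$ on $X$ and $R_\lambda(\lambda I-B)=I$ on $D(B)$, so $(0,\infty)\subset\rho(B)$, and the bound $\norm{R_\lambda}\leq\int_0^\infty e^{-\lambda t}\,\d t=1/\lambda$ is immediate.

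Sufficiency ($\Leftarrow$) is the substantive direction. Given the resolvent bound, I would introduce the Yosida approximations $B_\lambda=\lambda B R_\lambda=\lambda^2 R_\lambda-\lambda I$, which are bounded. The key preliminary facts are: (i) $\lambda R_\lambda x\to x$ as $\lambda\to\infty$ for every $x\in X$ — proved first for $x\in D(B)$ from $\lambda R_\lambda x-x=R_\lambda Bx$ together with $\norm{R_\lambda}\leq1/\lambda$, then extended to all of $X$ by density of $D(B)$ and uniform boundedness of $\lambda R_\lambda$; and hence (ii) $B_\lambda x=\lambda R_\lambda Bx\to Bx$ for $x\in D(B)$. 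Since $B_\lambda$ is bounded it generates the uniformly continuous semigroup $e^{tB_\lambda}$, and writing $e^{tB_\lambda}=e^{-\lambda t}e^{\lambda^2 tR_\lambda}$ with $\norm{e^{\lambda^2 tR_\lambda}}\leq e^{\lambda t}$ gives the uniform contraction estimate $\norm{e^{tB_\lambda}}\leq 1$. Because the $B_\lambda$ commute, for $x\in D(B)$ one obtains $\norm{e^{tB_\lambda}x-e^{tB_\mu}x}\leq t\,\norm{B_\lambda x-B_\mu x}$, which by (ii) is Cauchy uniformly on compact $t$-intervals; defining $T(t)x=\lim_{\lambda\to\infty}e^{tB_\lambda}x$ on $D(B)$ and extending by density yields a contraction $C^0$-semigroup $\{T(t)\}$.

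The main obstacle, and the step I would treat most carefully, is verifying that the generator of $\{T(t)\}$ is exactly $B$ and not a proper extension. Passing to the limit in $e^{tB_\lambda}x-x=\int_0^t e^{sB_\lambda}B_\lambda x\,\d s$ (using the uniform convergence just established together with (ii)) shows that the generator $\tilde{B}$ of $\{T(t)\}$ satisfies $\tilde{B}x=Bx$ for $x\in D(B)$, i.e.\ $B\subseteq\tilde{B}$. Equality then follows from a range argument: $1\in\rho(B)$ by hypothesis, so $I-B$ maps $D(B)$ onto $X$, while $1\in\rho(\tilde{B})$ by the necessity direction applied to the contraction semigroup $\{T(t)\}$, so $I-\tilde{B}$ is injective. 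A surjection whose injective extension is $I-\tilde{B}$ must coincide with it, forcing $D(\tilde{B})=D(B)$ and $\tilde{B}=B$. Undoing the normalisation finally restores the growth bound $\norm{T(t)}\leq e^{\omega t}$ for the original operator $A$.
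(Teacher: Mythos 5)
Your argument is the classical Yosida-approximation proof of the Hille--Yosida generation theorem, and it is correct in outline; note, however, that the paper does not prove this statement at all --- it is quoted as a known special case of the Hille--Yosida theorem with citations to Pazy and Evans --- so there is no ``paper proof'' to compare against. Your reduction to the contraction case via $B=A-\omega I$ and $S(t)=e^{-\omega t}T(t)$, the Laplace-transform construction of the resolvent for necessity, and the scheme $B_\lambda=\lambda^2R_\lambda-\lambda I$, $\|e^{tB_\lambda}\|\le e^{-\lambda t}e^{\lambda^2 t\|R_\lambda\|}\le 1$, the Cauchy estimate $\|e^{tB_\lambda}x-e^{tB_\mu}x\|\le t\|B_\lambda x-B_\mu x\|$ on $D(B)$, and the range argument identifying the generator are all exactly as in the standard references. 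The one step stated too loosely is in the necessity direction: ``differentiating $S(t)x$ and integrating by parts'' only makes sense for $x\in D(B)$ and yields $R_\lambda(\lambda I-B)=I$ on $D(B)$; to get the other identity $(\lambda I-B)R_\lambda=I$ on all of $X$ you must first show $R_\lambda x\in D(B)$ for arbitrary $x$, which is done by applying the difference quotient $h^{-1}(S(h)-I)$ to the integral and computing $h^{-1}(S(h)-I)R_\lambda x\to\lambda R_\lambda x-x$ as $h\downarrow 0$. With that routine repair (and the equally routine verifications that the limit $T(t)$ satisfies the semigroup law and strong continuity, which you leave implicit), the proof is complete.
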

\noindent Here a $C^0${\it -semigroup} $\{T(t)\}_{t\geq 0}$ is a family  of bounded linear operators $T(t):X\to X$ satisfying (i) $T(0)=$identity, (ii) $T(s+t)=T(s)T(t)$ for all $s,t\geq 0$ and (iii) $t\mapsto T(t)p$ is continuous from $[0,\infty)\to X$ for all $p\in X$. The {\it resolvent set } $\rho(A)$    is the set of $\lambda$ such that 
$\lambda\1-A:D(A)\to X$
is one-to-one and onto, and for $\lambda\in \rho(A)$ the {\it resolvent operator} $R_\lambda:X\to X$ is defined by
$R_\lambda w=(\lambda\1-A)^{-1}w.$
\begin{thm}
\label{waveexistence}Let $A$ be given by \eqref{Adef},\eqref{domainA}. Then
$A$ is the generator of a $C^0$-semigroup $\{T(t)\}_{t\geq 0}$ on $H$, and the energy equation
\be
\label{energy}
E(T(t)p)=E(p),\;\;t\geq 0
\ee
is satisfied for all $p=\left(\begin{array}{l}u_0\\ v_0\end{array}\right)\in H$, where
\be
E(w):=\half\int_\Omega (|\nabla u(x)|^2+v(x)^2)\,dx.
\ee
\end{thm}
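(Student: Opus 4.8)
The plan is to verify the two hypotheses of the Hille--Yosida theorem (Theorem \ref{HY}) for $A$ with the choice $\omega=\half$, and then to deduce the energy equation \eqref{energy} first for initial data in $D(A)$ and afterwards for general $p\in H$ by density. Since $D(A)$ has already been noted to be dense in $H$ and $A:D(A)\to H$ closed, what remains is to show that $(\half,\infty)\subset\rho(A)$ together with the bound $\norm{R_\lambda}\leq(\lambda-\half)^{-1}$ for $\lambda>\half$.

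For surjectivity, fix $\lambda>0$ and $f=\left(\begin{array}{l}g\\ h\end{array}\right)\in H$, and seek $w=\left(\begin{array}{l}u\\ v\end{array}\right)\in D(A)$ solving $(\lambda\1-A)w=f$. Writing out the two components gives $\lambda u-v=g$ and $\lambda v-\Delta u=h$; eliminating $v=\lambda u-g$ reduces the system to the single elliptic equation
\[
-\Delta u+\lambda^2 u=h+\lambda g \quad\text{in }\Omega .
\]
The associated bilinear form $a(u,\phi)=\int_\Omega(\nabla u\cdot\nabla\phi+\lambda^2 u\phi)\,dx$ is bounded and coercive on $H_0^1(\Omega)$ with coercivity constant $\min(1,\lambda^2)>0$, a bound that uses the full $H^1$ norm and is therefore insensitive to whether $\Omega$ is bounded (so no Poincar\'e inequality is needed); moreover $\phi\mapsto\int_\Omega(h+\lambda g)\phi\,dx$ is a bounded functional on $H_0^1(\Omega)$ since $h\in L^2(\Omega)$ and $g\in H_0^1(\Omega)$. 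Lax--Milgram then yields a unique $u\in H_0^1(\Omega)$; the equation shows $\Delta u=\lambda^2 u-(h+\lambda g)\in L^2(\Omega)$, and $v=\lambda u-g\in H_0^1(\Omega)$, so $w\in D(A)$ and $\lambda\1-A$ is onto. Injectivity will follow from the estimate below, giving $(0,\infty)\subset\rho(A)$.

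For the resolvent bound I would compute, for $w=\left(\begin{array}{l}u\\ v\end{array}\right)\in D(A)$,
\[
\langle Aw,w\rangle=\int_\Omega\left(uv+\nabla v\cdot\nabla u+v\,\Delta u\right)\,dx=\int_\Omega uv\,dx,
\]
where the integration by parts $\int_\Omega v\,\Delta u\,dx=-\int_\Omega\nabla u\cdot\nabla v\,dx$, valid for $u\in H_0^1(\Omega)$ with $\Delta u\in L^2(\Omega)$ and $v\in H_0^1(\Omega)$, cancels the gradient contribution. By Young's inequality $\langle Aw,w\rangle\leq\half\norm{w}^2$, i.e. $A-\half\1$ is dissipative. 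Consequently, for $\lambda>\half$ and $w\in D(A)$,
\[
\norm{f}\,\norm{w}\geq\langle(\lambda\1-A)w,w\rangle\geq\left(\lambda-\half\right)\norm{w}^2 ,
\]
whence $\norm{w}\leq(\lambda-\half)^{-1}\norm{f}$. This simultaneously gives injectivity of $\lambda\1-A$ and the required estimate $\norm{R_\lambda}\leq(\lambda-\half)^{-1}$, so Theorem \ref{HY} applies and $A$ generates a $C^0$-semigroup with $\norm{T(t)}\leq e^{t/2}$.

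Finally, for the energy equation, take $p\in D(A)$ and set $w(t)=T(t)p=\left(\begin{array}{l}u(t)\\ v(t)\end{array}\right)$, which by standard semigroup theory is a $C^1$ curve in $H$ with $\dot w=Aw$ and $w(t)\in D(A)$ for all $t$. Writing $E(w)=B(w,w)$ for the bounded symmetric bilinear form $B(w,w')=\half\int_\Omega(\nabla u\cdot\nabla u'+vv')\,dx$, the chain rule gives $\frac{d}{dt}E(w(t))=2B(w,Aw)=\int_\Omega(\nabla u\cdot\nabla v+v\,\Delta u)\,dx=0$ by the same integration by parts. Hence $E(T(t)p)=E(p)$ for $p\in D(A)$, and since $E$ is continuous on $H$, each $T(t)$ is bounded, and $D(A)$ is dense, this extends to all $p\in H$. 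The step needing the most care is the integration by parts on a general, possibly unbounded, $\Omega$ (justified by the distributional definition of $\Delta u$ paired against $H_0^1$ functions). The genuinely noteworthy point, rather than an obstacle, is that $A$ is only quasi-dissipative in the natural inner product of $H$ --- one finds $\langle Aw,w\rangle=\int_\Omega uv\,dx$, not a nonpositive quantity --- so one cannot expect a contraction semigroup; this is consistent with the fact that the full $H$-norm, unlike the conserved energy, may genuinely grow because of the $\int_\Omega u^2\,dx$ contribution.
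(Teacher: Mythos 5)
Your proposal is correct and follows essentially the same route as the paper: verifying the Hille--Yosida hypotheses with $\omega=\half$ by reducing the resolvent equation to $-\Delta u+\lambda^2u=\lambda g+h$ (your Lax--Milgram step is equivalent to the paper's direct minimization of the associated convex functional), deriving the bound $\|R_\lambda\|\leq(\lambda-\half)^{-1}$ from the same computation $\langle Aw,w\rangle=\int_\Omega uv\,dx$, and obtaining the energy equation by differentiating $E$ along $C^1$ trajectories for $p\in D(A)$ and extending by density. The only cosmetic difference is that you phrase the resolvent estimate as quasi-dissipativity of $A$ rather than as an inner-product identity for the resolvent equations, and your closing observation about the non-contractive nature of the semigroup matches the paper's discussion of $L^2$-norm growth in Section \ref{growth}.
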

\begin{proof}
We first show that $(0,\infty)\subset \rho(A)$. Thus we need to prove that for any $\lambda>0$ and $f\in H_0^1(\Omega)$, $g\in L^2(\Omega)$ there exists a unique solution $\left(\begin{array}{l}u\\ v\end{array}\right)\in H$ with $\Delta u\in L^2(\Omega)$, $v\in H_0^1(\Omega)$ to 
\begin{align}
\lambda u -v&=f,\label{eq1}\\
\lambda v-\Delta u&=g.\label{eq2}
\end{align}
Since $v=\lambda u-f$ we just need to show that there is a unique solution $u\in H_0^1(\Omega)$ to 
\be
\label{EL}-\Delta u+\lambda^2 u-(\lambda f+g)=0.
\ee
The existence follows by minimization of the functional
\be 
I(u)=\int_\Omega \left(\half|\nabla u|^2+\frac{\lambda^2}{2}u^2-(\lambda f+g)u\right)\,dx
\ee
over $H_0^1(\Omega)$ via the direct method of the calculus of variations and showing that the minimizer satisfies the Euler-Lagrange equation, and the uniqueness follows since the difference $z$ of two solutions to \eqref{EL} satisfies
$\int_\om(z^2+|\nabla z|^2)\,dx=0$.
     To prove the resolvent estimate, note that 
$R_\lambda\left(\begin{array}{l}f\\ g\end{array}\right)=\left(\begin{array}{l}u\\ v\end{array}\right)$, and thus 
taking the  inner product in $H$ of  \eqref{eq1}, \eqref{eq2} with $ \left(\begin{array}{c}u\\v\end{array}\right)$  we obtain
\begin{eqnarray}\lambda\int_\Omega (u^2+|\nabla u|^2+v^2)\,dx=(v,u)+\int_\Omega (f u+\nabla f\cdot\nabla u+gv)\,dx,
\end{eqnarray}
where $(\cdot,\cdot)$ denotes the inner product in $L^2(\om)$.
But $\displaystyle(v,u)\leq \half \int_\Omega(u^2+|\nabla u|^2+v^2)\,dx$, and hence

\be 
(\lambda -\half)\int_\Omega(u^2+|\nabla u|^2+v^2)\,dx\leq\int_\Omega (f,\nabla f,g)\cdot (u,\nabla u, v)\,dx,
\ee
from which the estimate
\be
\|R_\lambda\|\leq \frac{1}{\lambda -\half} \text{ for  }\lambda>\half
\ee
follows.  

It remains to prove that the energy equation \eqref{energy} holds, this not being immediately obvious since the formal derivation of it via multiplication of \eqref{wave} by $u_t$ and integration is not valid for $u_0\in H_0^1(\om), v_0\in L^2(\om)$.  To this end we note that 
\be
\label{energy1a}
E(w):=\half\int_\Omega (|\nabla u(x)|^2+v(x)^2)\,dx
\ee
is a $C^1$ function of $w=\left(\begin{array}{c}u\\v\end{array}\right)\in H$, and that if $w\in D(A)$ then
\be
\label{energy2}
E'(w)(Aw)=\int_\Omega(\nabla u\cdot \nabla v+v\cdot \Delta u)\,dx=0.
\ee
But by a well-known result for linear semigroups (see, for example, \cite[Theorem 2.4]{pazy83}), the map $t\mapsto T(t)p$ is $C^1$   for $p\in D(A)$ with derivative $AT(t)p$. Thus if $p\in D(A)$ then $t\mapsto E(T(t)p)$ is $C^1$ with derivative
$E'(T(t)p)(AT(t)p)=0$. Hence $E(T(t)p)=E(p)$ for all $t\geq 0$, $p\in D(A)$, and thus, since $D(A)$ is dense in $H$ and $E$ and $T(t)$ are continuous,  also for $p\in X$ . 
\end{proof}
\begin{remark} An alternative approach to proving existence, involving much the same calculations, is to apply the Lumer-Phillips theorem (see e.g. \cite[Theorem 4.3]{pazy83}) by showing that the operator $-A+\lambda\1$  is maximal monotone for $\lambda>\half$.
\end{remark}
\begin{remark} 
The fact that the time reversibility of \eqref{wave} implies that $A$ generates a {\it group} $\{T(t)\}_{t\in\R}$ of bounded linear operators can be proved either by checking that $-A$ generates a $C^0$-semigroup or by verifying that \be T(-t):=\left(\begin{array}{cc}\1&0\\ 0&-\1\end{array}\right)T(t)\left(\begin{array}{cc}\1&0\\ 0&-\1\end{array}\right)\ee
satisfies $T(-t)T(t)=\1$ by calculating $\frac{d}{dt}(T(-t)T(t)p)=0$ for $p\in D(A)$.
\end{remark}

\begin{remark}
\label{regularity}
Using \cite[Theorem 2.4]{pazy83} we also have the regularity result (see \cite[Theorem 7.2]{arendt2011}) that if 
$u_0\in H_0^1(\Omega)$, $\Delta u_0\in L^2(\Omega)$ and
$u_1\in H_0^1(\Omega)$ then $u_{tt}=\Delta u\in C([0,\infty);L^2(\Omega))$, $u_t\in C([0,\infty);H_0^1(\Omega))$. 
\end{remark}
\begin{remark}
\label{zerou0}
Note that if $\left(\begin{array}{c}u_0\\v_0\end{array}\right)\in H$ then, since $\left(\begin{array}{c}u_0\\0\end{array}\right)=A\left(\begin{array}{c}0\\u_0\end{array}\right)$,
\be
\label{zero}
T(t)\left(\begin{array}{c}u_0\\v_0\end{array}\right)=T(t)\left(\begin{array}{c}0\\v_0\end{array}\right)+AT(t)\left(\begin{array}{c}0\\u_0\end{array}\right),
\ee
so that the semigroup is determined by its action on initial data with first component zero (for the same observation for more general hyperbolic equations see \cite[p15]{john55}). Said differently, the solution $u$ with initial data $u(x,0)=u_0(x), u_t(x,0)=0$ is given by $u=h_t$, where $h$ is the (strong) solution of the wave equation with initial data $h(x,0)=0, h_t(x,0)=u_0(x)$.
\end{remark}

\section{Growth of $L^2$ norm as $t\to\infty$.}
\label{growth}
Theorem \ref{waveexistence} gives the extra information that $\| u(\cdot,t)\|_{2}\leq e^{t/2}\|p\|_H$, where $\|\cdot\|_2$ denotes the norm in $L^2(\om)$. However we have the better estimate 
\be
\label{ubound}
\|u(\cdot,t)\|_2^2\leq\|u_0\|_2^2+2(u_0,v_0)t+2E(u_0,v_0)t^2,
\ee
where $E(u_0,v_0):=\half\int_\om(|\nabla u_0|^2+|v_0|^2)\,dx$.
This follows from the energy equation \eqref{energy} by integrating the identity $\frac{d}{dt}(u_t,u)=\|u_t\|_2^2-\|\nabla u\|_2^2$ to deduce that
\begin{align}
\nonumber
(u,u_t)(t)&=(u_0,v_0)+\int_0^t(\|u_t(\cdot,\tau)\|_2^2-\|\nabla u(\cdot,\tau)\|_2^2)\,d\tau,\\
&=(u_0,v_0)+2E(u_0,v_0)t-2\int_0^t\|\nabla u(\cdot,\tau)\|_2^2\,d\tau,\label{ubound0}
\end{align}
and hence
\be
\label{ubound1}
\|u(\cdot,t)\|_2^2=\|u_0\|_2^2+2(u_0,v_0)t+2E(u_0,v_0)t^2-4\int_0^t\int_0^s\|\nabla u(\cdot,\tau)\|_2^2\,d\tau\,ds.
\ee
The identity \eqref{ubound0} follows straightforwardly for  $p\in D(A)$, and then for  an arbitrary $p\in 
H$ via approximation of $p$ by a sequence $p^{(j)}\in D(A)$.

If $\om$ is bounded then the energy equation \eqref{energy} and Poincar\'e inequality  imply that $\|u(\cdot,t)\|_{2}$ is bounded, but  for unbounded $\Omega$ it is possible that $\|u(\cdot,t)\|_{L^2(\Omega)}\to\infty$ as $t\to\infty$. For example, in the case $\Omega=\R^n$ with $u_0=0$ we have that 
\be
\label{exp}\nonumber
\hat u(\xi,t)=\hat v_0(\xi)\frac{\sin (|\xi|t)}{|\xi|}.
\ee
For $\ep>0$ and $r=|\xi|$ let
\begin{eqnarray}\label{initial}\hat v_0(\xi)=\left\{\begin{array}{ll}
r^{-\frac{n}{2}+\ep},& r\in [0,1),\\
0,&r\geq 1.\end{array}\right.
\end{eqnarray}
Then 
$$\int_{\R^n}|\hat v_0|^2d\xi=\omega_n\int_0^1r^{-1+2\ep}dr<\infty,$$
where $\omega_n={\mathcal H}^{n-1}(S^{n-1})$,
so that $v_0\in L^2(\R^n)$ and  is real and radially symmetric (because the Fourier transform of a radially symmetric function is radially symmetric -- see, for example,  \cite[Corollary 1.2]{steinweiss}). But
\begin{align*}
\|\hat u\|^2_{L^2(\R^n)}&=\omega_n\int_0^1r^{2\ep-3}\sin^2(rt)\,dr\\
&=t^{2(1-\ep)}\omega_n\int_0^ts^{2\ep-1}\left(\frac{\sin s}{s}\right)^2\,ds,
\end{align*}
so that by Plancherel's theorem $\lim_{t\to\infty}\|u(\cdot,t)\|_{2}t^{\ep-1}=C_\ep>0$. (It does not seem simple to construct such an example for any dimension $n$ with $v_0$ having compact support.)
 
Note, however, that for $\om=\R^n$ we always have 
\be
\label{subquad}
\lim_{t\to\infty}t^{-1}\|u(\cdot,t)\|_2=0.
\end{eqnarray}
This follows from Plancherel's theorem since 
\be
\label{quadrate}
t^{-2}\|\hat u(\cdot,t)\|_2^2\leq 2\int_{\R^n}\left(|\hat u_0|^2\frac{\cos^2(|\xi|t)}{t^2}+|\hat v_0|^2\frac{\sin^2(|\xi|t)}{|\xi|^2t^2}\right)\,d\xi,
\ee
which tends to zero as $t\to\infty$ by dominated convergence as $\frac{\sin\tau}{\tau}$ is bounded. For general $\om$, we see from \eqref{ubound0} that \eqref{subquad} implies  Cesaro equipartition of energy (see \cite[p116ff]{goldstein85} for related results), i.e.
\be
\label{cesaro}
\lim_{t\to\infty}\frac{1}{t}\int_0^t\|u_t(\cdot,\tau)\|_2^2\,d\tau=\lim_{t\to\infty}\frac{1}{t}\int_0^t\|\nabla u(\cdot,\tau)\|_2^2\,d\tau=E(u_0,v_0).
\ee

In fact, for $\om=\R^n, n=1,2,$ estimates in \cite{ikehata23} show  that blow-up of the $L^2$ norm as $t\to\infty$ occurs under the additional hypotheses $\int_{\R^n}(1+|x|)|v_0(x)|\,dx<\infty$ and $\int_{\R^n}v_0\,dx\neq 0$. In the case $n=1$ the blow-up for $v_0\in L^1(\R)$ with $\int_\R v_0\,dx\neq 0$ follows easily from d'Alembert's formula 
\be
\label{dAlembert}
u(x,t)=\half\left(u_0(x+t)+u_0(x-t)+\int_{x-t}^{x+t}v_0(z)\,dz\right),
\ee
since if $v_0\in L^1(\R)$ with $\int_\R v_0^+\,dx\neq \int_\R v_0^-\,dx$  then by Fatou's Lemma
\begin{align*}
\infty&=\int_\R\lim_{t\to\infty}\left(\int_{x-t}^{x+t}v_0(z)\,dz\right)^2dx\\
&\leq \liminf_{t\to\infty}\|2u(\cdot,t)-u_0(\cdot+t)-u_0(\cdot-t)\|^2_2\\
&\leq 8\left(\|u_0\|_2^2+\lim_{t\to\infty}\|u(\cdot,t)\|_2^2\right).
\end{align*}
If, on the other hand, $v_0\in L^1(\R)$ and $\int_\R v_0\,dx=0$ then it can happen either that $\|u(\cdot,t)\|_2$ remains bounded as $t\to\infty$, or that $\lim_{t\to\infty}\|u(\cdot,t)\|_2=\infty$. For the first case one can take $v_0=0$, and for the second take $v_0$ odd with 
\begin{eqnarray*}
v_0(x)=\left\{\begin{array}{cc}0,&x\in[0,1],\\ x^{-\alpha},&x\in (1,\infty),\end{array}\right.
\end{eqnarray*}
with $1<\alpha<\frac{3}{2}$.
Then $v_0\in L^1(\R)\cap L^2(\R)$ with $\int_\R v_0\,dx=0$, and (without loss of generality taking $u_0=0$)
\begin{align*}
4\|u(\cdot,t)\|^2_2&=\int_\R \left(\int_{x-t}^{x+t}v_0(s)\,ds\right)^2dx\\
&\geq \int_{t+1}^\infty\left(\int_{x-t}^{x+t}s^{-\alpha}ds\right)^2dx
=\int_{t+1}^\infty\left(\frac{(x+t)^{1-\alpha}-(x-t)^{1-\alpha}}{1-\alpha}\right)^2dx\\
&=t^{3-2\alpha}\int_{1+\frac{1}{t}}^\infty \left( \frac{(1+{y})^{1-\alpha}-(y-1)^{1-\alpha}}{1-\alpha}\right)^2dy
\geq Ct^{3-2\alpha}
\end{align*}
for $t\geq 1$ and some constant  $C>0$.

For any $n$  the set of $v_0\in L^2(\R^n)$ such that $\|u(\cdot, t)\|_2$ is bounded as $t\to\infty$ is dense in $L^2(\R^n)$. Indeed if $\hat v_0=0$  in some  neighbourhood $B(0,\ep)$ of $0$,  then  
\be
\label{ngeq3}
\|\hat u(\cdot,t)\|^2_2\leq 2\left(\|\hat u_0\|^2_2+\ep^{-2}\int_{|\xi|>\ep}|\hat v_0|^2d\xi\right)\leq C<\infty
\ee
for all $t\geq 0$. The linear subspace  of such functions is dense in $L^2(\R^n)$, since otherwise there would be some nonzero $\theta\in L^2(\R^n)$ with $(\theta, v_0)=0$ whenever $\hat v_0\in C_0^\infty(\R^n\setminus\overline{B(0,\ep)})$ for some $\ep>0$. But by Plancherel's theorem this implies that $(\hat\theta,\hat v_0)=0$ for all such $\hat v_0$, so that $\hat\theta(\xi)=0$ for $|\xi|>\ep$ and any $\ep>0$, thus $\hat\theta=0$ and hence $\theta=0$, a contradiction.

As shown by Brodsky \cite{brodsky67} (see also  \cite{shinbrot68}), in the case when $\frac{\hat v_0}{|\xi|}\in L^2(\R^n)$ (equivalently $(-\Delta)^{-\half}v_0\in L^2(\R^n)$) it follows from \eqref{FT} and the Riemann-Lebesgue lemma that
\be
\label{brodskyresult}
\lim_{t\to\infty}\|u(\cdot,t)\|^2_2=\half\left(\|u_0\|^2_2+\int_{\R^n}\frac{|\hat v_0|^2}{|\xi|^2}d\xi\right).
\ee

For general $u_0, v_0$ it is not clear to the author whether there can be a solution with $\|u(\cdot,t)\|_2$ unbounded but not tending to infinity as $t\to \infty$. This seems to depend delicately on the behaviour of $\hat v_0(\xi)$ as $|\xi|\to 0$.

,

\section {The adjoint of the wave operator and weak solutions}
\label{adjointsec}
In order to show that $T(t)p$ in Theorem \ref{waveexistence} is the unique weak solution, appropriately defined, of \eqref{wave} we first recall the definition and properties (see e.g. \cite{goldberg}) of the {\it adjoint} $A^*$ of a closed  densely defined linear operator $A$ on a real Banach space $X$ with dual space $X^*$. Let $D(A^*)$ be the set of those $v\in X^*$ for which there exists $v^*\in X^*$ such that 
$$\langle w,v^*\rangle=\langle Aw,v\rangle\text{ for all }w\in D(A).$$
Since $D(A)$ is dense, $v^*$ is unique.
Then $A^*:D(A^*)\to X^*$  is the linear operator defined on $X^*$ by $A^*v=v^*$, so that
\be
\label{dual}\nonumber
\langle w, A^* v\rangle=\langle Aw,v\rangle \text{ for all }w\in D(A).
\ee
$A^*$ is closed, and if $X$ is reflexive then $D(A^*)$ is dense in $X^*$. 
\begin{defn}
\label{weaksolutiondef}
 Let $\tau>0$. A function $w:[0,\infty)\to X$ is a {\it weak solution of} $\dot w=Aw$  {\it on} $[0,\infty)$ if \\
(i) $w:[0,\infty)\to X$ is weakly continuous,\\
(ii) for every $z\in D(A^*)$ the function $t\mapsto\langle w(t),z\rangle$ is continuously differentiable on $[0,\infty)$ and 
$$\frac{d}{dt}\langle w(t),z\rangle= \langle w(t), A^*z\rangle, \;t\geq 0.$$
\end{defn}
\begin{remark}
This is weaker than the definition in \cite{j9} of a weak solution in that we do not assume that $w:[0,\infty)\to X$ is (strongly) continuous.
\end{remark}
We will use the following uniqueness result.
\begin{thm}
\label{uniqueness}
Let $A$ generate the $C^0$-semigroup $\{T(t)\}_{t\geq 0}$ of bounded linear operators on $X$. Then for any $p\in X$, $w(t):=T(t)p$ is the 
 unique weak solution of $\dot w=Aw$ on $[0,\infty)$  satisfying $w(0)=p$.
\end{thm}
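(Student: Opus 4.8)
For the existence half I would simply verify that $w(t):=T(t)p$ satisfies Definition \ref{weaksolutiondef}. Condition (i) is immediate, since a $C^0$-semigroup is strongly, hence weakly, continuous. For (ii) the plan is to use the elementary semigroup identity $A\int_0^t T(s)p\,ds=T(t)p-p$, valid for every $p\in X$, and pair it with an arbitrary $z\in D(A^*)$:
$$\langle T(t)p-p,z\rangle=\Big\langle A\int_0^t T(s)p\,ds,\,z\Big\rangle=\Big\langle \int_0^t T(s)p\,ds,\,A^*z\Big\rangle=\int_0^t\langle T(s)p,A^*z\rangle\,ds,$$
the second equality being the defining relation of $A^*$ (as $\int_0^t T(s)p\,ds\in D(A)$) and the third the interchange of the bounded functional $\langle\,\cdot\,,A^*z\rangle$ with the integral. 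Since $s\mapsto\langle T(s)p,A^*z\rangle$ is continuous, the right-hand side is $C^1$ in $t$ with derivative $\langle T(t)p,A^*z\rangle$, which is exactly (ii); and $w(0)=p$.

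For uniqueness it suffices, by linearity of the two conditions, to show that a weak solution $u$ with $u(0)=0$ vanishes identically. First I would integrate (ii) to obtain, for all $z\in D(A^*)$,
$$\langle u(t),z\rangle=\int_0^t\langle u(s),A^*z\rangle\,ds=\langle U_1(t),A^*z\rangle,\qquad U_1(t):=\int_0^t u(s)\,ds.$$
The key structural step is to read this as the assertion that $U_1(t)\in D(A)$ with $AU_1(t)=u(t)$. This is where the weak closedness of $A$ enters: because $A$ is closed and densely defined, its graph is weakly closed and coincides with the set of pairs $(y,h)\in X\times X$ that annihilate the graph of $A^*$ under the natural pairing (a bipolar-theorem argument), so $\langle y,A^*z\rangle=\langle h,z\rangle$ for all $z\in D(A^*)$ forces $(y,h)\in G(A)$.

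The remaining obstacle is that $u$ is only \emph{weakly} continuous, so $U_1$ need not be strongly differentiable and is not yet a classical solution. I expect this passage from weak to classical regularity to be the crux, and I would resolve it by integrating a second time. Since $u$ is weakly continuous it is locally bounded, so $U_1(t+h)-U_1(t)=\int_t^{t+h}u$ has norm $O(h)$ and $U_1$ is Lipschitz, hence strongly continuous. Setting $U_2(t):=\int_0^t U_1(s)\,ds$, strong continuity of $U_1$ makes $U_2$ strongly $C^1$ with $U_2'=U_1$; and since $U_1(s)\in D(A)$ with $AU_1=u$, closedness of $A$ gives $U_2(t)\in D(A)$ and $AU_2(t)=\int_0^t AU_1(s)\,ds=\int_0^t u(s)\,ds=U_1(t)$.

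Thus $U_2$ is a genuine classical solution: $U_2\in C^1$, $U_2(t)\in D(A)$, $U_2'(t)=AU_2(t)$, and $U_2(0)=0$. Classical uniqueness then closes the argument, since for $s\in[0,t]$
$$\frac{d}{ds}\,T(t-s)U_2(s)=-T(t-s)AU_2(s)+T(t-s)U_2'(s)=0,$$
whence $U_2(t)=T(t)U_2(0)=0$. Therefore $U_2\equiv0$, so $U_1=U_2'\equiv0$ and finally $u=AU_1\equiv0$. The merit of this double integration is that it produces a classically differentiable object out of a merely weakly continuous one, and so avoids any appeal to strong continuity of the adjoint semigroup or to reflexivity of $X$; the two ingredients doing the work are the closedness of $A$ and the double-adjoint characterization of $D(A)$.
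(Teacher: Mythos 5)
Your proof is correct and follows essentially the same route as the paper's: both rest on the annihilator characterization of the graph of $A$ in terms of $A^*$ (the ``lemma in \cite{j9}'' that the paper cites, which you reprove via the bipolar argument, valid without reflexivity) together with a double time-integration that turns the merely weakly continuous solution into a classical $C^1$, $D(A)$-valued one, to which standard uniqueness applies. The only differences are cosmetic: you apply the graph lemma after one integration and then commute $A$ with the second integral by closedness, whereas the paper integrates twice before invoking the lemma once; and your existence half uses the identity $A\int_0^t T(s)p\,ds=T(t)p-p$ in place of the paper's approximation of $p$ by elements of $D(A)$.
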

\begin{proof}
We first show that $w(t)=T(t)p$ is a weak solution. Let $p_j\in D(A)$ with $p_j\to p$ in $X$. Then for $t\geq 0$ and $z\in D(A^*)$ we have that
\begin{align}
\nonumber
\langle T(t)p_j,z\rangle&=\langle p_j,z\rangle+\int_0^t\langle AT(s)p_j,z\rangle ds\\
&=\langle p_j,z\rangle+\int_0^t\langle T(s)p_j,A^*z\rangle ds,
\end{align}
and passing to the limit $j\to\infty$ using the continuity of $T(t)$ and the boundedness of $\|T(s)p_j\|$ on $[0,t]$ we get
\be
\langle T(t)p,z\rangle=\langle p,z\rangle+\int_0^t\langle T(s)p,A^*z\rangle ds,
\ee
from which (ii) follows.

To prove the uniqueness, suppose that there are two weak solutions $w,\tilde w$ with initial data $p$, and let $W=w-\tilde w$. Then $W:[0,\infty)\to X$ is weakly continuous, hence strongly measurable and bounded in norm on compact subsets of $[0,\infty)$ \cite[pp 59,75,84]{hillephillips}. In particular $W$ is (Bochner) integrable on $[0,t]$ for any $t>0$. Hence, for any $z\in D(A^*)$,
\be
\label{weak1}
\langle W(t),z\rangle =\int_0^t\langle W(s),A^*z\rangle ds=\langle\int_0^tW(s)\,ds,A^*z\rangle.
\ee
Integrating \eqref{weak1} with respect to $t$ we have that 
\be
\label{weak2}
\langle \int_0^t W(s)\,ds,z\rangle=\langle \int_0^t\int_0^sW(\tau)\,d\tau\,ds,A^*z\rangle
\ee
for all $z\in D(A^*)$, so that by a lemma in \cite{j9} (see also \cite[p127]{goldberg}) $\int_0^t\int_0^sW(\tau)\,d\tau\,ds\in D(A)$ and
\be
\label{weak3}
\int_0^tW(s)\,ds=A\int_0^t\int_0^sW(\tau)\,d\tau\,ds, \; t\geq 0.
\ee
Hence $r(t):=\int_0^t\int_0^sW(\tau)\,d\tau$ is differentiable in $t$ and solves $\dot r(t)=Ar(t)$ with $r(0)=0$, so that by a well-known result (see e.g. \cite[p483]{kato}, \cite[Chapter 4]{pazy83}, \cite[Theorem 35.2]{sellyou}) $r(t)=0$. Hence also $\int_0^tW(s)\,ds=0$, thus  $\int_0^t\langle W(s)\,ds,z\rangle=0$ for any $z\in X^*$. Differentiating with respect to $t$ and using the continuity of $\langle W(s),z\rangle$ we have that $\langle W(t),z\rangle=0$ for all $t\geq 0$ and $z$. Hence $W=0$ and $w=\tilde w$.
\end{proof}
\begin{remark}
This result is given for $X$ a Hilbert space in \cite[Corollary 4.8.1]{balakrishnan81}. We  note that the theorem \cite[Theorem 4.8.3]{balakrishnan81} of which it is a corollary appears to omit the hypothesis that $w$ is weakly continuous, and its proof requires some consequent adjustment.
\end{remark}
\begin{remark}
It is natural to conjecture that for a closed densely defined operator $A$ the existence of a unique weak solution $w(t)=S(t)p$ satisfying $w(0)=p$ for each initial data $p\in X$ implies that $A$ generates a $C^0$-semigroup $\{T(t)\}_{t\geq 0}$ and that $S(t)=T(t)$. This would be a generalization of the result in \cite{j9} to the case when weak solutions are only required to be weakly continuous in $t$. A crucial step would be to show that each linear map $S(t)$ is continuous, which was proved in \cite{j9} using the closed graph theorem. However, to generalize this step would seem to require a closed graph theorem for a linear map from $X$ to the space $C([0,T];X_w)$, the space of weakly continuous maps from $[0,T]$ to $X$ with the compact open topology. Although there are many generalizations of the closed graph theorem to maps between topological vector spaces (see e.g. \cite{husain}), the author was unable to find one which applies to this case.
\end{remark}
Thus we need to calculate the adjoint of the wave operator \eqref{Adef} on the Hilbert space $H=H_0^1(\om)\times L^2(\om)$. 
\begin{lemma}
\label{laplacian}The Laplace operator 
$\Delta$ with $D(\Delta)=\{u\in H_0^1(\om):\Delta u\in L^2(\om)\}$ is  self-adjoint  on $L^2(\om)$.
\end{lemma}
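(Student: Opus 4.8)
The plan is to prove the two inclusions $\Delta\subseteq\Delta^*$ and $\Delta^*\subseteq\Delta$ separately, the second being the crux and relying on the solvability already established in the proof of Theorem \ref{waveexistence}.

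First I would record the Green-type identity that for $u\in H_0^1(\om)$ and $w\in D(\Delta)$ one has $(u,\Delta w)=-(\nabla u,\nabla w)$. This follows because, by the definition of the distributional Laplacian and one integration by parts, $\int_\om w\,\Delta\phi\,dx=-\int_\om\nabla w\cdot\nabla\phi\,dx$ for all $\phi\in C_c^\infty(\om)$; since $\Delta w\in L^2(\om)$ both sides are continuous in $\phi$ with respect to the $H^1$ norm, and the identity extends to $\phi$ replaced by any $u\in H_0^1(\om)$. Symmetry of $\Delta$ on $D(\Delta)$, and hence $\Delta\subseteq\Delta^*$, is then immediate: for $u,w\in D(\Delta)$ we get $(\Delta u,w)=-(\nabla u,\nabla w)=(u,\Delta w)$. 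Taking $w=u$ also shows $(\Delta u,u)=-\|\nabla u\|_2^2\leq0$, so $\Delta$ is non-positive.

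For the reverse inclusion I would use that, by the proof of Theorem \ref{waveexistence} (with $\lambda=1$, $f=0$), the operator $\1-\Delta:D(\Delta)\to L^2(\om)$ is onto. Let $v\in D(\Delta^*)$ and set $v^*:=\Delta^*v$. Testing the defining relation $(\Delta u,v)=(u,v^*)$ against $u\in C_c^\infty(\om)\subseteq D(\Delta)$ shows that $\Delta v=v^*$ in the sense of distributions, so $v\in L^2(\om)$ has $\Delta v\in L^2(\om)$; what is not yet clear is that $v$ lies in $H_0^1(\om)$. To force this, choose $u_0\in D(\Delta)$ with $(\1-\Delta)u_0=v-v^*$ and compute, for arbitrary $u\in D(\Delta)$, the inner product $((\1-\Delta)u,v-u_0)$. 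Using the adjoint relation $(\Delta u,v)=(u,v^*)$ for the term containing $v$ and the symmetry $(\Delta u,u_0)=(u,\Delta u_0)=(u,u_0-(v-v^*))$ for the term containing $u_0$, all terms cancel and $((\1-\Delta)u,v-u_0)=0$. Since $\1-\Delta$ is onto, $v-u_0$ is orthogonal to all of $L^2(\om)$, whence $v=u_0\in D(\Delta)$ and $\Delta^*v=v^*=\Delta u_0=\Delta v$. Combined with $\Delta\subseteq\Delta^*$ this gives $\Delta=\Delta^*$.

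The main obstacle is exactly this last inclusion $D(\Delta^*)\subseteq D(\Delta)$: for a general, possibly unbounded, domain $\om$ one cannot recover the homogeneous boundary condition $v\in H_0^1(\om)$ from the bare information $v,\Delta v\in L^2(\om)$ by integration by parts or by elliptic regularity, and it is precisely the surjectivity of $\1-\Delta$ furnished by Theorem \ref{waveexistence} that supplies the missing boundary information. One could instead invoke the abstract criterion that a non-positive symmetric operator with $\mathrm{Ran}(\mu\1-\Delta)=L^2(\om)$ for some $\mu>0$ is self-adjoint, but the direct cancellation above keeps the argument self-contained.
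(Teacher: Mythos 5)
Your proof is correct and follows essentially the same route as the paper: symmetry of $\Delta$ on $D(\Delta)$ via density of $C_0^\infty(\om)$ in $H_0^1(\om)$, combined with the surjectivity of $\1-\Delta:D(\Delta)\to L^2(\om)$ already furnished by the proof of Theorem \ref{waveexistence}. The only difference is that the paper concludes by citing the abstract criterion that a symmetric maximal monotone operator is self-adjoint (\cite[Proposition 7.6]{brezis2011}), whereas you prove that criterion inline with your cancellation argument, which is a valid, self-contained unpacking of the same idea.
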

\begin{proof}
This is proved in \cite[Example 7.2.1]{arendt2011}. Alternatively, one can first note that if  $v\in H_0^1(\om)$ there exists a sequence $\varphi^{(j)}\in C_0^\infty(\om)$ with  $\varphi^{(j)}\to v$ in $H^1(\om)$, so that if $u\in D(\Delta)$ we have that $$(-\Delta u,v)=\lim_{j\to\infty}(-\Delta u,\varphi^{(j)})=\lim_{j\to\infty}(\nabla u,\nabla\varphi^{(j)})=(\nabla u,\nabla v),$$ 
and hence $(-\Delta u,v)=(u,-\Delta v)$ for $u,v\in D(\Delta)$. Hence $-\Delta$ is symmetric. As in the proof of Theorem \ref{waveexistence}, for any $z\in L^2(\om)$ there exists a unique $u\in D(\Delta)$ satisfying
$-\Delta u +u=z$. Hence $-\Delta$ is also maximal monotone, so that the self-adjointness follows from \cite[Proposition 7.6]{brezis2011}.
\end{proof}
\noindent For $\theta\in L^2(\om)$ denote by $(\1-\Delta)^{-1}\theta$ the unique solution $u\in D(\Delta)$ of $-\Delta u+u=\theta$.
\begin{thm}
\label{adjoint}
 The adjoint of the wave operator $A$ is given by 
$$A^*=\left(\begin{array}{cc}0&(\1-\Delta)^{-1}-\1\\\1-\Delta&0\end{array}\right),$$
with 
$$D(A^*)=\{\left(\begin{array}{l}\chi\\ \psi\end{array}\right)\in H: \Delta \chi\in L^2(\Omega), \psi\in H_0^1(\Omega)\}.$$
\end{thm}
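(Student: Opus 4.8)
The plan is to compute $A^*$ straight from the definition: $z=\left(\begin{smallmatrix}\chi\\ \psi\end{smallmatrix}\right)\in D(A^*)$ precisely when there exists $z^*=\left(\begin{smallmatrix}\chi^*\\ \psi^*\end{smallmatrix}\right)\in H$ with $\langle Aw,z\rangle=\langle w,z^*\rangle$ for all $w=\left(\begin{smallmatrix}u\\ v\end{smallmatrix}\right)\in D(A)$, and then $A^*z=z^*$. Since $Aw=\left(\begin{smallmatrix}v\\ \Delta u\end{smallmatrix}\right)$, writing out both inner products in $H$ turns this into the requirement that
\[
\int_\om\big(v\chi+\nabla v\cdot\nabla\chi+\psi\,\Delta u\big)\,dx=\int_\om\big(u\chi^*+\nabla u\cdot\nabla\chi^*+v\psi^*\big)\,dx
\]
hold for every admissible $(u,v)$. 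The first step is to exploit that in $D(A)$ the components $u\in D(\Delta)$ and $v\in H_0^1(\om)$ range independently, so this identity decouples into two. Setting $u=0$ gives $\int_\om(v\chi+\nabla v\cdot\nabla\chi)\,dx=\int_\om v\psi^*\,dx$ for all $v\in H_0^1(\om)$; testing against $C_0^\infty(\om)$ shows it is solvable with $\psi^*\in L^2(\om)$ exactly when $\chi-\Delta\chi\in L^2(\om)$, i.e. $\chi\in D(\Delta)$, and then $\psi^*=(\1-\Delta)\chi$. This already yields the lower-left entry of $A^*$ and the condition $\Delta\chi\in L^2(\om)$.

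The substance of the computation is the $v=0$ relation $\int_\om\psi\,\Delta u\,dx=\int_\om(u\chi^*+\nabla u\cdot\nabla\chi^*)\,dx$ for all $u\in D(\Delta)$. Here I would invoke the integration-by-parts identity $(\nabla u,\nabla\chi^*)=(-\Delta u,\chi^*)$, valid for $u\in D(\Delta)$, $\chi^*\in H_0^1(\om)$ and resting on the self-adjointness of $\Delta$ (Lemma \ref{laplacian}), to rewrite the right-hand side as $((\1-\Delta)u,\chi^*)$. Then I substitute $\theta:=(\1-\Delta)u$. Because $\1-\Delta:D(\Delta)\to L^2(\om)$ is a bijection (established in the proof of Theorem \ref{waveexistence}), $\theta$ sweeps out all of $L^2(\om)$ with $\Delta u=((\1-\Delta)^{-1}-\1)\theta$, so the relation becomes $\big(((\1-\Delta)^{-1}-\1)\theta,\psi\big)=(\theta,\chi^*)$ for all $\theta\in L^2(\om)$. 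Moving the bounded self-adjoint operator $(\1-\Delta)^{-1}-\1$ onto $\psi$ forces $\chi^*=((\1-\Delta)^{-1}-\1)\psi$, the upper-right entry claimed in the theorem.

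The main obstacle — the one genuinely non-routine point — is pinning down the domain, and in particular seeing where the constraint $\psi\in H_0^1(\om)$ comes from. The element $\chi^*=((\1-\Delta)^{-1}-\1)\psi$ is determined only as a function in $L^2(\om)$, whereas membership $z^*\in H$ requires $\chi^*\in H_0^1(\om)$. Since $(\1-\Delta)^{-1}\psi\in D(\Delta)\subset H_0^1(\om)$ for every $\psi\in L^2(\om)$, the decomposition $\chi^*=(\1-\Delta)^{-1}\psi-\psi$ shows that $\chi^*\in H_0^1(\om)$ if and only if $\psi\in H_0^1(\om)$. This is exactly the extra condition producing
\[
D(A^*)=\Big\{\left(\begin{smallmatrix}\chi\\ \psi\end{smallmatrix}\right)\in H:\Delta\chi\in L^2(\om),\ \psi\in H_0^1(\om)\Big\}.
\]
Finally I would note that every step above is an equivalence, so the converse holds automatically: for any such $z$, setting $z^*=\left(\begin{smallmatrix}((\1-\Delta)^{-1}-\1)\psi\\ (\1-\Delta)\chi\end{smallmatrix}\right)$ gives $z^*\in H$ and reverses the identities, confirming $z\in D(A^*)$ with $A^*z=z^*$ of the stated form.
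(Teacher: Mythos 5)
Your proposal is correct and follows essentially the same route as the paper: decouple the adjoint identity into the $u=0$ and $v=0$ relations, read off $q=(\1-\Delta)\chi$ from the first, and use the self-adjointness of $\Delta$ (Lemma \ref{laplacian}) to extract $\chi^*=\left[(\1-\Delta)^{-1}-\1\right]\psi$ from the second. The only cosmetic difference is that the paper rewrites the second relation as $\int_\om pu\,dx=\int_\om(p+\psi)\Delta u\,dx$ and concludes $p+\psi\in D(\Delta)$ with $\Delta(p+\psi)=p$ directly, whereas you substitute $\theta=(\1-\Delta)u$; your explicit observation that $\chi^*\in H_0^1(\om)$ forces $\psi\in H_0^1(\om)$ is a welcome clarification of a point the paper leaves implicit.
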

\begin{proof} By the definition of $A^*$ we have that 
$\left(\begin{array}{ll}\chi\\ \psi\end{array}\right)\in D(A^*)\text{ and }A^*\left(\begin{array}{ll}\chi\\ \psi\end{array}\right)=\left(\begin{array}{ll}p\\ q\end{array}\right)$
if and only if 
$\left(\begin{array}{ll}\chi\\ \psi\end{array}\right)\in H$ and
\be
\label{lap1}\langle\left(\begin{array}{ll}u\\ v\end{array}\right),\left(\begin{array}{ll}p\\ q\end{array}\right) \rangle=\langle \left(\begin{array}{ll}v\\ \Delta u\end{array}\right),\left(\begin{array}{ll} \chi\\ \psi\end{array}\right)\rangle\text{ for all }\left(\begin{array}{ll}u\\ v\end{array}\right)\in D(A),
\ee
that is
\be
\label{lap2}
\int_\Omega(pu+\nabla p\cdot \nabla u+qv)\,dx=\int_\Omega(\chi v+\nabla\chi\cdot\nabla v+\psi\Delta u)\,dx
\ee
for all $u,v\in H_0^1(\Omega)$ with $\Delta u\in L^2(\Omega)$.
\eqref{lap2} is equivalent to the two equations
\begin{align}
\label{lap3}
\int_\Omega(\chi v+\nabla\chi\cdot\nabla v)\,dx&=\int_\Omega qv\,dx \;\text{ for all }v\in H_0^1(\Omega),\\
\label{lap4}
\int_\Omega (pu+\nabla p\cdot\nabla u)\,dx&=\int_\Omega \psi\Delta u\,dx \;
\text{ for all }u\in D(\Delta).
\end{align}
But \eqref{lap3} says that $q=(\1-\Delta) \chi$, while \eqref{lap4} can be written as
\be
\label{lap5}
\int_\Omega pu\,dx=\int_\Omega (p+\psi)\Delta u\,dx \;\text{ for all }u\in D(\Delta),
\ee
so that, by Lemma \ref{laplacian}, $p+\psi\in D(\Delta)$ and $\Delta (p+\psi)=p$, from which it follows that  $p=[(\1-\Delta)^{-1}-\1]\psi$.
\end{proof}
\begin{remark}
For a bounded domain $\om$ one can use the equivalent inner product $((u,v))=\int_\om \nabla u\cdot\nabla v\,dx$ on $H_0^1(\om)$, when the adjoint takes the  simpler form (see \cite[Lemma 3.1]{j55}) $A^*=-\left(\begin{array}{cc}0&\1\\ \Delta &0\end{array}\right)$.
\end{remark}
By Definition \ref{weaksolutiondef}, $w=\left(\begin{array}{l}u\\v\end{array}\right)$ is a weak solution of the wave equation $\dot w=Aw$ on $[0,\infty)$  if and only if $w:[0,\infty)\to H$ is weakly continuous, and,  for any $z=\left(\begin{array}{l}\chi\\ \psi\end{array}\right)\in D(A^*)$, the function $t\mapsto\langle w(t),z\rangle$ is continuously differentiable with derivative
\be
\label{lap6}
\frac{d}{dt}\langle w(t),z\rangle=\langle w(t),A^*z\rangle, \;t\geq 0.
\ee
Equivalently, for $t\geq 0$,
\be 
\label{lap7}
\frac{d}{dt}\int_\Omega(u\chi+\nabla u\cdot\nabla\chi+v\psi)\,dx =\int_\Omega(up+\nabla u\cdot\nabla p+vq)\,dx,
\ee
where $\Delta(p+\psi)=p$ and $q=\chi-\Delta\chi$, or
\begin{align}
\label{lap8}
\frac{d}{dt}\int_\Omega uq\,dx&=\int_\Omega vq\,dx\; \text{ for all }q\in L^2(\om),\\
\frac{d}{dt}\int_\Omega v\psi\,dx&=-\int_\Omega \nabla u\cdot\nabla \psi\,dx\;\text{ for all }\psi\in H_0^1(\om),\label{lap9}
\end{align}
since for any $q\in L^2(\om)$ there exists a unique solution $\chi\in H_0^1(\om)$ to $\chi-\Delta\chi=q$.
But it is easily checked that \eqref{lap8} holds if and only if $u$ is weakly differentiable with respect to $t$ with $u_t(\cdot,t)=v(\cdot,t)$ for all $t\geq 0$, that is
\be
\label{lap10}
\int_0^\infty \varphi'(t)u(\cdot,t)\,dt=-\int_0^\infty\varphi(t)v(\cdot,t)\,dt \text{ for all }\varphi\in C_0^\infty(0,\infty).
\ee 
Then \eqref{lap9} becomes
\be
\label{lap11}
\frac{d}{dt}\int_\Omega u_t \psi\,dx=-\int_\Omega\nabla u\cdot\nabla\psi\,dx\text{  for all } \psi\in H_0^1(\Omega), t\geq 0.
\ee
Hence $w$ is a weak solution if and only if  $u:[0,\infty)\to H_0^1(\om)$, $v:[0,\infty) \to L^2(\om)$ are weakly continuous, $v=u_t$, and \eqref{lap11} holds. In particular {\it weak solutions in this sense are unique}.
\begin{remark}
For the case $\om=\R^n$ it is possible to prove uniqueness of weak solutions for $u_0,v_0$ merely distributions using properties of the fundamental solution of the wave equation (see \cite[Theorem 13.1]{treves}). 
\end{remark}

Next we give some further properties of $A, A^*$ from which we deduce the absence of nontrivial linear conserved quantities.
\begin{lemma}\,\\
\label{adj}\noindent
$(i)$ $\Delta D(\Delta)$ is dense in $L^2(\om)$.\\
$(ii)$ $R(A)=\{Az:z\in D(A)\}$ is dense in $H$.\\
$(iii)$ $A^*:D(A^*)\to H=H^*$ is one-to-one.
\end{lemma}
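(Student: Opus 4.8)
The plan is to prove the three parts in the stated order, since once part (i) is in hand parts (ii) and (iii) follow almost formally: (ii) is just the observation that $R(A)$ splits as a product, and (iii) is the standard duality between density of $R(A)$ and injectivity of $A^*$. All of the genuine content is therefore concentrated in part (i), and that is where I expect the only real difficulty to lie.

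For part (i) I would argue by orthogonality, using that $\Delta$ with domain $D(\Delta)$ is self-adjoint on $L^2(\om)$ by Lemma~\ref{laplacian}. For a densely defined operator one has $R(\Delta)^\perp=\ker(\Delta^*)$, so self-adjointness gives $\overline{\Delta D(\Delta)}=R(\Delta)^{\perp\perp}=(\ker\Delta)^\perp$; hence it suffices to show $\ker\Delta=\{0\}$. Suppose $g\in D(\Delta)$ with $\Delta g=0$. Since $g\in H_0^1(\om)$, the weak identity $(-\Delta g,g)=(\nabla g,\nabla g)$ established inside the proof of Lemma~\ref{laplacian} yields $\int_\om|\nabla g|^2\,dx=0$, so $\nabla g=0$ almost everywhere. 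The remaining step — deducing $g=0$ — is the main obstacle, because on a possibly unbounded $\om$ there is no Poincar\'e inequality to fall back on. I would handle it by extending $g$ by zero to $\tilde g\in H^1(\R^n)$, which is legitimate for $g\in H_0^1(\om)$ and gives $\nabla\tilde g=0$ a.e. on $\R^n$, so that $\tilde g$ is constant; the constant must vanish, either because $\tilde g\in L^2(\R^n)$ when $\om=\R^n$, or because $\tilde g=0$ on the positive-measure set $\R^n\setminus\om$ when $\om\neq\R^n$. Thus $g=0$, $\ker\Delta=\{0\}$, and $\Delta D(\Delta)$ is dense in $L^2(\om)$.

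For part (ii), since $A\left(\begin{smallmatrix}u\\v\end{smallmatrix}\right)=\left(\begin{smallmatrix}v\\\Delta u\end{smallmatrix}\right)$ and $D(A)=\{(u,v):u\in D(\Delta),\,v\in H_0^1(\om)\}$, the two components range independently, so $R(A)=H_0^1(\om)\times\Delta D(\Delta)$; its closure in $H$ is $H_0^1(\om)\times\overline{\Delta D(\Delta)}=H_0^1(\om)\times L^2(\om)=H$ by part (i), giving density. Finally, for part (iii), suppose $z\in D(A^*)$ with $A^*z=0$. Then by the defining property of $A^*$ we have $\langle Aw,z\rangle=\langle w,A^*z\rangle=0$ for every $w\in D(A)$, so $z\perp R(A)$; since $R(A)$ is dense by part (ii), it follows that $z=0$, and hence $A^*:D(A^*)\to H$ is one-to-one.
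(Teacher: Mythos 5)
Your proof is correct and follows essentially the same route as the paper: part (i) reduces via self-adjointness of $\Delta$ to showing $\ker\Delta=\{0\}$ (the paper phrases this as $z\perp\Delta D(\Delta)$ forcing $\Delta z=0$, which is the same thing, and then uses the identical $\nabla z=0$ plus extension-by-zero argument), while (ii) and (iii) are the same formal reductions, your product-structure observation $R(A)=H_0^1(\om)\times\Delta D(\Delta)$ being a slightly slicker way to obtain (ii) than the paper's orthogonality argument. One tiny remark on (i): your case split is unnecessary, and the second branch could misfire when $\R^n\setminus\om$ has measure zero, but the first reason --- that the constant $\tilde g$ lies in $L^2(\R^n)$ and hence vanishes --- works for every $\om$.
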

\begin{proof} (i). Suppose that $\int_\Omega z\Delta u\,dx=0$ for some $z\in L^2(\om)$ and all $u\in D(\Delta)$. By Lemma \ref{laplacian}, $z\in D(\Delta)$ and $\Delta z=0$. Choosing $u=z$ we thus have  $\int_\om|\nabla z|^2dx=0$ and so $\nabla z=0$ in $\om$. Hence $z=0$ (for example because the extension $\tilde z$ of $z$ by zero belongs to $H_0^1(\R^n)$ and $\nabla\tilde z=0$, so that $\tilde z$ is constant and thus zero).\\
(ii) If not there would exist a nonzero $\left(\begin{array}{l}\chi\\ \psi\end{array}\right)\in H$ with $\langle \left(\begin{array}{l}\chi\\ \psi\end{array}\right),A\left(\begin{array}{l}u\\ v\end{array}\right)\rangle=0$ for all $\left(\begin{array}{l}u\\ v\end{array}\right)\in D(A)$, that is
\be
\label{lap11a}\int_\Omega(\chi v+\nabla \chi\cdot\nabla v+\psi\Delta u)\,dx=0
\ee
for all $u,v\in H_0^1(\Omega)$ with $\Delta u\in L^2(\Omega)$.
Taking first $u=0$ and $v=\chi$ we get that
 $\chi=0$. Then $\psi=0$ by (i).\\
(iii)  Now suppose $A^*\left(\begin{array}{l}\chi\\ \psi\end{array}\right)=0$, for some $\left(\begin{array}{l}\chi\\ \psi\end{array}\right)\in D(A^*)$. Then
$$\langle \left(\begin{array}{l}u\\ v\end{array}\right),A^*\left(\begin{array}{l}\chi\\ \psi\end{array}\right)\rangle=0$$
for all $\left(\begin{array}{l}u\\ v\end{array}\right)\in D(A)$, so that by (ii) $\chi=\psi=0$. Hence $A^*$ is one-to-one.
\end{proof}

\begin{thm}
 There is no nontrivial linear constant of motion for \eqref{waveabstract}, that is there is no nonzero $z\in H$ such that $\langle T(t)p,z\rangle=\langle p,z\rangle$ for all $t\geq 0$ and $p\in D(A)$.
\end{thm}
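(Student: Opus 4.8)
The plan is to differentiate the conservation identity in $t$, evaluate at $t=0$, and read off that $z$ must annihilate the range of $A$; the conclusion then follows immediately from Lemma \ref{adj}. Suppose, for contradiction, that $z\in H$ is nonzero and that $\langle T(t)p,z\rangle=\langle p,z\rangle$ for all $t\geq 0$ and all $p\in D(A)$. For $p\in D(A)$ the orbit $t\mapsto T(t)p$ is $C^1$ with derivative $AT(t)p$ (the result of \cite[Theorem 2.4]{pazy83} already invoked in the proof of Theorem \ref{waveexistence}), so $t\mapsto\langle T(t)p,z\rangle$ is continuously differentiable with derivative $\langle AT(t)p,z\rangle$. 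Since this function is constant in $t$, its derivative vanishes identically; evaluating at $t=0$ yields
$$\langle Ap,z\rangle=0\quad\text{for all }p\in D(A).$$

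This says precisely that $z$ is orthogonal to $R(A)=\{Ap:p\in D(A)\}$. By Lemma \ref{adj}(ii) the range $R(A)$ is dense in $H$, and hence $z=0$, contradicting the assumption that $z$ is nonzero. Equivalently, one may phrase the last step through the adjoint: the relation $\langle Ap,z\rangle=\langle p,0\rangle$ for all $p\in D(A)$ means, by the definition of $A^*$, that $z\in D(A^*)$ with $A^*z=0$, and the injectivity of $A^*$ established in Lemma \ref{adj}(iii) then forces $z=0$. Either route closes the argument.

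There is no genuine obstacle beyond justifying the differentiation: the substance of the theorem is carried entirely by Lemma \ref{adj}, and the only point to verify is that constancy of $\langle T(t)p,z\rangle$ legitimately produces the vanishing of $\langle Ap,z\rangle$, which is immediate from the $C^1$ regularity of the orbit through each $p\in D(A)$ (the one-sided derivative at $t=0$ of a constant function being zero). Thus the present statement is best regarded as a direct corollary of Lemma \ref{adj} — in particular of the density of $R(A)$, or dually the injectivity of $A^*$ — rather than as requiring an independent computation.
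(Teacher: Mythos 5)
Your proof is correct and follows essentially the same route as the paper: differentiate $\langle T(t)p,z\rangle$ at $t=0$ to get $\langle Ap,z\rangle=0$ for all $p\in D(A)$, then conclude $z=0$ from the density of $R(A)$ in Lemma \ref{adj}(ii). The extra justification of the differentiation and the alternative phrasing via injectivity of $A^*$ are fine but not needed beyond what the paper already records.
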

\begin{proof}
If $\langle T(t)p,z\rangle$ were constant in $t$ for all $p\in D(A)$ then 
$$0=\left.\frac{d}{dt}\langle T(t)p,z\rangle\right|_{t=0}=\langle Ap,z\rangle$$
for all $p\in D(A)$, so that $z=0$ by Lemma \ref{adj} (ii). 
\end{proof}

Finally in this section we motivate the use of the phase space $H=H_0^1(\om)\times L^2(\om)$ for an unbounded domain $\om\subset\R^n$.  To this end we assume that $\om=\bigcup_{j=1}^\infty\om_j$ is the union of an increasing ($\om_j\subset \om_{j+1}$) sequence of bounded open sets $\om_j\subset\R^n$. In Theorem \ref{sgconverge} below we show that the semiflow $\{T(t)\}_{t\geq 0}$ on $H$ for the wave equation is the limit of the corresponding semiflows $\{T_j(t)\}_{t\geq 0}$ for the wave equation on $\om_j$  with phase space $H_j=H_0^1(\om_j)\times L^2(\om_j)$. We use the following lemma, which is a slight generalization of \cite[Lemma 5.12]{j12}.
\begin{lemma}
\label{weakcompact}
Let $X$ be a reflexive Banach space, $T>0$, and $w^{(j)}:[0,T]\to X$ satisfy\\
\noindent $\rm (i)$ $\|w^{(j)}(t)\|_X\leq M<\infty$ for all $j=1,2,\ldots$ and $t\in[0,T]$,\\
\noindent $\rm (ii)$ there is a dense subset $E$ of $X^*$ such that for any $v\in E$ the functions $t\mapsto \langle w^{(j)},v\rangle$ are equicontinuous on $[0,T]$ for $j$ sufficiently large, that is given $\ep>0$, there exists $\delta(v)>0$ and $J(v)$ such that
\be
\label{equi} |\langle w^{(j)}(t),v\rangle-\langle w^{(j)}(s),v\rangle|<\ep  \text{ for }|s-t|\leq\delta(v),\; j\geq J(v) .
\ee
Then there exists a weakly continuous $w:[0,T]\to X$ and a subsequence $w^{(j_k)}$ of $w^{(j)}$ converging uniformly to $w$ in the weak topology, i.e. for any sequence $s_k\to s$ in $[0,T]$ we have $w^{(j_k)}(s_k)\weak w(s)$ in $X$.
\end{lemma}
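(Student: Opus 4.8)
The plan is to extract the limit $w$ by a diagonal argument over a countable dense subset of $E$, then use the uniform bound (i) together with the equicontinuity (ii) to upgrade pointwise convergence to uniform weak convergence. First I would fix a countable dense subset $\{v_m\}_{m=1}^\infty$ of $E$ (possible since $X^*$ is separable when $X$ is reflexive and separable; if separability of $X$ is not assumed one works instead with the closed separable subspace generated by the relevant data, or replaces the countable set argument by a net/compactness argument). For each fixed $m$, the scalar functions $t\mapsto\langle w^{(j)}(t),v_m\rangle$ are uniformly bounded by $M\|v_m\|$ and, for $j\geq J(v_m)$, equicontinuous on $[0,T]$; hence by the Arzel\`a--Ascoli theorem some subsequence converges uniformly on $[0,T]$. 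Running this through a Cantor diagonal procedure over $m$, I would produce a single subsequence $w^{(j_k)}$ such that $t\mapsto\langle w^{(j_k)}(t),v_m\rangle$ converges uniformly on $[0,T]$ for every $m$.

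Next I would identify the limit as coming from a weakly continuous $X$-valued function. For each fixed $s\in[0,T]$, the net $w^{(j_k)}(s)$ is bounded in the reflexive space $X$, so it has weakly convergent subsequences; the diagonal convergence on the dense set $\{v_m\}$ forces all such weak subsequential limits to agree (two limits would give the same pairing against every $v_m$, hence against all of $X^*$ by density and the uniform bound $M$). Call the common limit $w(s)$. Thus $w^{(j_k)}(s)\weak w(s)$ in $X$ for every $s$, and $\|w(s)\|_X\leq M$ by weak lower semicontinuity of the norm. To see that $t\mapsto\langle w(t),v\rangle$ is continuous for each $v\in X^*$, I would first check it for $v=v_m$: the uniform convergence of the equicontinuous functions $\langle w^{(j_k)}(\cdot),v_m\rangle$ yields a continuous limit, which must equal $\langle w(\cdot),v_m\rangle$. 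For general $v\in X^*$, given $\ep>0$ choose $v_m$ with $\|v-v_m\|<\ep/(3M)$ and use the uniform bound $M$ to control $|\langle w(t)-w(s),v-v_m\rangle|\leq 2M\|v-v_m\|$, reducing continuity in $v$ to the already-established continuity in $v_m$. Hence $w:[0,T]\to X$ is weakly continuous.

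Finally I would prove the uniform statement: if $s_k\to s$ then $w^{(j_k)}(s_k)\weak w(s)$. Fixing $v\in X^*$ and $\ep>0$, I would pick $v_m$ within $\ep/(3M)$ of $v$ and estimate
\begin{align*}
|\langle w^{(j_k)}(s_k)-w(s),v\rangle|
&\leq |\langle w^{(j_k)}(s_k),v-v_m\rangle|
 + |\langle w^{(j_k)}(s_k)-w(s),v_m\rangle|\\
&\quad{} + |\langle w(s),v_m-v\rangle|.
\end{align*}
The first and third terms are each at most $M\|v-v_m\|<\ep/3$ by (i) and the bound on $w$. For the middle term I would split via $w(s_k)$, using the uniform convergence $\langle w^{(j_k)}(s_k),v_m\rangle-\langle w(s_k),v_m\rangle\to 0$ (this is exactly where uniformity in $t$ over the diagonal subsequence is needed, not mere pointwise convergence) together with continuity of $\langle w(\cdot),v_m\rangle$ at $s$. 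The main obstacle, and the step deserving care, is precisely this interchange of the two limits $j_k\to\infty$ and $s_k\to s$: it is the uniform (Arzel\`a--Ascoli) convergence on the dense set, as opposed to pointwise convergence, that licenses the diagonal estimate, and the equicontinuity hypothesis (ii) is what makes that uniform convergence available in the first place.
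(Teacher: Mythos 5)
Your argument is sound where it applies, but it proves a slightly weaker lemma than the one stated: by diagonalizing over a countable dense subset $\{v_m\}$ of $X^*$ you import a separability hypothesis that the statement does not make ($X$ is assumed reflexive only). You flag this yourself, but neither of your proposed repairs closes the gap. The closed separable subspace ``generated by the relevant data'' is not available, because the $w^{(j)}$ are not assumed weakly continuous or even measurable --- hypothesis (ii) is only an asymptotic equicontinuity, with $J$ depending on $v$ and $\ep$, so for a fixed $j$ no continuity is guaranteed and the union of the ranges $w^{(j)}([0,T])$ need not be separable. A net/Tychonoff compactness argument, on the other hand, produces a subnet, whereas the statement demands a subsequence. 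The paper avoids the issue by putting the countable dense set in the time variable rather than in the dual: it extracts, by a diagonal argument and weak sequential compactness of bounded sequences in a reflexive space (no separability needed), a subsequence for which $w^{(j_k)}(\tau)$ converges weakly at every rational $\tau$, and then shows $\langle w^{(j_k)}(t),v\rangle$ is Cauchy for arbitrary $t\in[0,T]$ and $v\in X^*$ by choosing a single $\tilde v\in E$ with $\|\tilde v-v\|_{X^*}<\ep/2M$ and a single rational $\tau$ with $|\tau-t|\leq\delta(\tilde v)$; the density of $E$ is used only pointwise, never through a countable subfamily. The limit functional $v\mapsto\lim_k\langle w^{(j_k)}(t),v\rangle$ is bounded by (i), hence by reflexivity is an element of $X^{**}=X$, which defines $w(t)$.

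Two smaller points. First, your appeal to Arzel\`a--Ascoli is to a family that is only eventually equicontinuous (the estimate holds for $j\geq J(v_m,\ep)$ with $J$ depending on $\ep$), so the classical theorem does not literally apply; the standard proof adapts (pointwise extraction on rationals plus a finite $\delta$-net of $[0,T]$ yields the uniform Cauchy property), but this deserves a line. Second, your handling of the final double-limit interchange, splitting the middle term through $w(s_k)$, is correct and is in fact more explicit than the paper's ``similar arguments then show''; the same step can alternatively be run directly from the equicontinuity estimate at $\tilde v$ without first establishing uniform convergence of the scalar functions. In the paper's application of the lemma, where $X=H_0^1(\om)\times L^2(\om)$ is separable, your proof suffices essentially as written.
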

\begin{proof}
By (i) and a diagonal argument we can extract a subsequence $w^{(j_k)}$ such that $w^{(j_k)}(\tau)$ converges weakly to a limit for any rational $\tau\in[0,T]$. We claim that $w^{(j_k)}(t)$ converges weakly to a limit $w(t)$ for any $t\in[0,T]$. This follows provided $\langle w^{(j_k)}(t),v\rangle$ is a Cauchy sequence for any $v\in X^*$, since then  by (i) the limit is a bounded linear functional of $v$, so that since $X$ is reflexive it defines an element of $X^{**}=X$.  To prove this, let $\ep>0$ and choose $\tilde v\in E$ with $\|\tilde v-v\|_{X^*}<\frac{\ep}{2M}$, and then a rational $\tau\in[0,T]$ with $|\tau-t|\leq\delta(\tilde v)$. Then for $k,l$ sufficiently large we have by (i) and \eqref{equi} that
\begin{align}
\nonumber
|\langle w^{(j_k)}(t)- w^{(j_l)}(t),v\rangle|
&\leq |\langle w^{(j_k)}(t)- w^{(j_l)}(t),\tilde v\rangle|+\ep\\ \nonumber
&\leq |\langle w^{(j_k)}(\tau)- w^{(j_l)}(\tau),\tilde v\rangle|+3\ep\leq 4\ep,
\end{align}
as required. Similar arguments then show that $w$ is weakly continuous and that $w^{(j_k)}(s_k)\weak w(s)$ if $s_k\to s$.
\end{proof}
\noindent For a function $f\in H_j$ set
\be
\label{ext}
\bar f(x)=\left\{\begin{array}{cl}f(x),& x\in\om_j\\
0,&x\in\om\setminus \om_j.\end{array}\right.
\ee
Note that if $f\in H_j$ then $\bar f\in H$.

\begin{thm}
\label{sgconverge}
Let $p_j\in H_j$ and $\bar p_j\to p$ in $H$. Then $\overline{T_j(t)p_j}\to T(t)p$ in $H$ uniformly on compact subsets of $[0,\infty)$.
\end{thm}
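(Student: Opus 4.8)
\emph{Strategy.} The plan is to regard the extended solutions as a bounded, weakly equicontinuous family in $H$, extract a weak limit by Lemma \ref{weakcompact}, identify that limit as the unique weak solution on $\om$ via Theorem \ref{uniqueness}, and finally upgrade weak to strong convergence using conservation of energy. Throughout I write $T_j(t)p_j=(u_j(t),v_j(t))\in H_j$ and $w_j(t)=\overline{T_j(t)p_j}=(\bar u_j(t),\bar v_j(t))\in H$, using the extension \eqref{ext}, and I use that $f\mapsto\bar f$ is an isometry of $H_j$ into $H$ (in particular $\nabla\bar u_j=\overline{\nabla u_j}$), so that the energy of $w_j(t)$ in $H$ equals that of $T_j(t)p_j$ in $H_j$, which by \eqref{energy} on $\om_j$ equals $E(\bar p_j)$.

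\emph{Uniform bound and equicontinuity.} Since $\|\nabla\bar u_j(t)\|_2^2+\|\bar v_j(t)\|_2^2=2E(\bar p_j)$ is bounded (as $\bar p_j\to p$ in $H$), and the $L^2$ part is controlled by applying \eqref{ubound1} on $\om_j$, one gets $\|w_j(t)\|_H\leq M(T)$ on $[0,T]$, which is hypothesis (i) of Lemma \ref{weakcompact}. For hypothesis (ii) I take the dense set $E=C_0^\infty(\om)\times C_0^\infty(\om)\subset H=H^*$. Given $z=(\chi,\psi)$ with $\supp\chi,\supp\psi\subset\om_J$, for every $j\geq J$ the restriction $z|_{\om_j}$ lies in $D(A_j^*)$ and $\langle w_j(t),z\rangle=\langle T_j(t)p_j,z|_{\om_j}\rangle_{H_j}$; the weak-solution property of $T_j(t)p_j$ (Theorem \ref{uniqueness} on $\om_j$) then gives $\tfrac{d}{dt}\langle w_j(t),z\rangle=\langle w_j(t),\overline{A_j^*(z|_{\om_j})}\rangle$. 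By the formula of Theorem \ref{adjoint} together with the uniform resolvent bound $\|(\1-\Delta)^{-1}\psi\|_{H_0^1(\om_j)}\leq\|\psi\|_2$, one has $\|A_j^*(z|_{\om_j})\|_{H_j}\leq C(z)$ independently of $j$, so these maps are uniformly Lipschitz in $t$. Lemma \ref{weakcompact}, with a diagonal argument over $T=1,2,\dots$, then yields a weakly continuous $w:[0,\infty)\to H$ and a subsequence with $w_{j_k}(s_k)\weak w(s)$ whenever $s_k\to s$ in a compact interval.

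\emph{Identification of the limit.} Rather than passing to the limit in the abstract identity for all $z\in D(A^*)$ — which would require $C_0^\infty(\om)\times C_0^\infty(\om)$ to be a core for $A^*$, and this can fail — I verify the intrinsic characterization \eqref{lap10}--\eqref{lap11}. For $\eta,\psi\in C_0^\infty(\om)$ with supports in some $\om_{j_0}$, the relations $v_j=(u_j)_t$ and \eqref{lap11} hold on $\om_j$ for $j\geq j_0$, and after extension by zero they become identities on $\om$ for $w_j$. Passing to the limit along $j_k$ (weak convergence of $w_{j_k}(t)$ against the fixed test functions, strong convergence $\bar p_{j_k}\to p$ of the data, and dominated convergence in $t$) gives $v=u_t$ and \eqref{lap11} tested against $C_0^\infty(\om)$, hence by density against all of $L^2(\om)$ and $H_0^1(\om)$. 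Since $w(0)=p$ and $w$ is weakly continuous, Theorem \ref{uniqueness} identifies $w(t)=T(t)p$; as the limit is subsequence-independent, the whole sequence converges weakly.

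\emph{Strong convergence (the main obstacle).} The delicate point is to rule out loss of energy to spatial infinity. Here conservation of energy gives $\|\nabla\bar u_j(t)\|_2^2+\|\bar v_j(t)\|_2^2=2E(\bar p_j)\to 2E(p)=\|\nabla u(t)\|_2^2+\|v(t)\|_2^2$, while weak lower semicontinuity gives $\|\nabla u(t)\|_2^2\leq\liminf\|\nabla\bar u_j(t)\|_2^2$ and $\|v(t)\|_2^2\leq\liminf\|\bar v_j(t)\|_2^2$. Writing $a_j=\|\nabla\bar u_j(t)\|_2^2$, $b_j=\|\bar v_j(t)\|_2^2$ with limits $a,b$, from $a_j+b_j\to a+b$ and $\liminf a_j\geq a$, $\liminf b_j\geq b$ one deduces $\limsup a_j=(a+b)-\liminf b_j\leq a$, so $a_j\to a$ and likewise $b_j\to b$; hence $\nabla\bar u_j(t)\to\nabla u(t)$ and $\bar v_j(t)\to v(t)$ strongly in $L^2$. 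Feeding the pointwise limits $\|\bar v_j(\tau)\|_2^2\to\|v(\tau)\|_2^2$, $\|\nabla\bar u_j(\tau)\|_2^2\to\|\nabla u(\tau)\|_2^2$ into \eqref{ubound1} on $\om_j$ (dominated convergence, via the uniform bound) shows $\|\bar u_j(t)\|_2^2\to\|u(t)\|_2^2$, so $\bar u_j(t)\to u(t)$ in $L^2$ as well. Thus $\|w_j(t)\|_H\to\|w(t)\|_H$, which with weak convergence yields $w_j(t)\to T(t)p$ strongly in $H$ for each $t$. Uniformity on compact $t$-intervals follows by contradiction: an $\ep$-separated sequence $t_j\to t^*$ would, by uniform weak convergence and the same norm computation at $t_j$, give $w_j(t_j)\to w(t^*)$ strongly, while $w(t_j)\to w(t^*)$ by strong continuity of $t\mapsto T(t)p$, a contradiction. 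Applying this to an arbitrary subsequence gives convergence of the full sequence.
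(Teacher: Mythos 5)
Your proof is correct and follows essentially the same route as the paper's: the same compactness lemma with $E=C_0^\infty(\om)\times C_0^\infty(\om)$, identification of the weak limit through the concrete formulation \eqref{lap8}--\eqref{lap9} and Theorem \ref{uniqueness}, and the upgrade from weak to strong convergence via energy conservation, weak lower semicontinuity and \eqref{ubound1}. The only differences are expository — you make explicit the uniform-in-$j$ bound on $A_j^*(z|_{\om_j})$ and the two-term lower-semicontinuity argument that the paper leaves implicit.
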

\begin{proof}
We denote by $A_j$ the infinitesimal generator of $T_j(t)$, that is 
\be
\label{Aj}
A_j=\left(\begin{array}{ll}0&\1\\ \Delta_j &0\end{array}\right),
\ee
with
$$D(A_j)= \{\left(\begin{array}{l}u\\ v\end{array}\right)\in H:\Delta_j u\in L^2(\Omega_j), v\in H_0^1(\Omega_j)\},$$
where $\Delta_j=\Delta$ with domain $D(\Delta_j)=\{u\in H_0^1(\om_j):\Delta u\in L^2(\om_j)\}$. Then by Theorem \ref{adjoint} we have that
$$A_j^*=\left(\begin{array}{cc}0&(\1-\Delta_j)^{-1}-\1\\\1-\Delta_j&0\end{array}\right),$$
with 
$$D(A_j^*)=\{\left(\begin{array}{l}\chi\\ \psi\end{array}\right)\in X_j: \Delta\chi\in L^2(\Omega_j), \psi\in H_0^1(\Omega_j)\}.$$
%Note that $(\1-\Delta_j)^{-1}\psi$ is the minimizer $u^*_j$ of $I_j$ in %$H_0^1(\om_j)$ given by \eqref{minomegaj} with $f=\psi$.  
By Theorem \ref{uniqueness} we have that
\be
\label{weakj}
\langle T_j(t)p_j,v\rangle=\langle p_j,v\rangle+\int_0^t\langle T_j(s)p_j,A^*_jv\rangle ds
\ee
for all $t\geq 0$ and $v\in D(A_j^*)$. Furthermore, if $T>0$ there exists $M>0$ such that 
\be
\label{jbound}
\| \overline{T_j(t)p_j}\|_H=\|T_j(t)p_j\|_{H_j}\leq M \text{ for all }t\in[0,T],
\ee
 this following from the energy equation \eqref{energy} and the estimate \eqref{ubound}.
Let $E=\{\left(\begin{array}{cc}\chi\\ \psi\end{array}\right): \chi, \psi\in C_0^\infty(\om)\}$, which is a dense subset of $H^*=H$. Given any $v\in E$ we have that  $v\in D(A_j^*)$ for large enough $j$. 
Thus from \eqref{weakj}, \eqref{jbound} we have that for $v\in E$ and large enough $j$
\be
\label{equiwave}
|\langle {T_j(t)p_j},v\rangle - \langle {T_j(s)p_j},v\rangle|\leq C|t-s|, \text{ for }s,t\in[0,T], 
\ee
where $C=C(v)$ is a constant, 
and $\langle\overline{T_j(t)p_j},v\rangle=\langle {T_j(t)p_j},v\rangle$ for each $t$. Hence $w^{(j)}(t):=\overline{T_j(t)p_j}$ satisfies the hypotheses of Lemma \ref{weakcompact} for any $T>0$ and so there is a subsequence $w^{(j_k)}$ and a weakly continuous $w:[0,\infty)\to H$ such that $w^{(j_k)}$ converges uniformly to $w$ on compact subsets of  $[0,\infty)$ in the weak topology.
% By Lemma \ref{convergemin} we have that $A_j^*v\to A^*v$ in $X$, and thus passing to the limit in \eqref{weakj} we deduce that
%\be
%\label{limiteq}
%\langle w(t),v\rangle=\langle p,v\rangle+\int_0^t\langle w(s),A^*v\rangle ds,\; t\geq 0.
%\ee
 Writing $p_j=\left(\begin{array}{c}u_{0j}\\v_{0j}\end{array}\right)$, $T_j(t)p_j=\left(\begin{array}{c}u_j(\cdot,t)\\v_j(\cdot,t)\end{array}\right)$, we have from \eqref{lap8}, \eqref{lap9} that for any $q,\psi\in C_0^\infty(\om)$ and $t\geq 0$, and for $k$ sufficiently large to ensure $\supp q\subset\om_{j_k},\, \supp \psi\subset\om_{j_k}$, 
\begin{align}
\label{w1}
\int_\om u_{j_k}(x,t)q(x)\,dx&=\int_\om u_{0j_k}(x)q(x)\,dx+\int_0^t\int_\om v_{j_k}(x,s)q(x)\,dx\,ds,\\
\int_\om v_{j_k}(x,t)\psi(x)\,dx&=\int_\om v_{0j_k}(x)\psi(x)\,dx-\int_0^t\int_\om \nabla u_{j_k}(x,s)\cdot\nabla\psi(x)\,dx\,ds.\label{w2}
\end{align}
Passing to the limit $k\to\infty$ and setting $w(t)=\left(\begin{array}{c}u(\cdot, t)\\v(\cdot,t)\end{array}\right)$, $p=\left(\begin{array}{c}u_0\\v_0\end{array}\right)$, we get that
\begin{align}
\label{w3}
\int_\om u(x,t)q(x)\,dx&=\int_\om u_0(x)q(x)\,dx+\int_0^t\int_\om v(x,s)q(x)\,dx\,ds,\\
\int_\om v(x,t)\psi(x)\,dx&=\int_\om v_0(x)\psi(x)\,dx-\int_0^t\int_\om \nabla u(x,s)\cdot\nabla\psi(x)\,dx\,ds.\label{w4}
\end{align}
By approximation \eqref{w3}, \eqref{w4} hold for all $q\in L^2(\om), \psi\in H_0^1(\om)$, so that $w$ is a weak solution on $[0,\infty)$ with $w(0)=p$. Hence, by Theorem \ref{uniqueness}, $w(t)=T(t)p$. The uniqueness also implies by a standard argument that the whole sequence $w^{(j)}$ converges, so that if $s_j\to s$ in $[0,\infty)$ then $\overline{T_j(s_j)p_j}\weak T(s)p$ in $H$. Since by the energy equation, for all $t\geq 0$
\begin{align}
\nonumber
\int_\om \left(|\nabla u_j(x,t)|^2+|v_j(x,t)|^2\right)\,dx&=\int_\om\left(|\nabla u_{0j}|^2+|v_{0j}|^2\right)\,dx\\  
\label{w5}\to \int_\om\left(|\nabla u_{0}|^2+|v_0|^2\right)\,dx&=\int_\om \left(|\nabla u(x,t)|^2+|v(x,t)|^2\right)\,dx,
\end{align}
we have that $\nabla u_j(\cdot,s_j)\to \nabla u(\cdot,s)$ strongly in $L^2(\om)^n$ and $v_j(\cdot,s_j)\to v(\cdot,s)$ strongly in $L^2(\om)$. From \eqref{ubound1} we deduce that also $u_j(\cdot,s_j)\to  u(\cdot,s)$ strongly in $L^2(\om)$ and hence 
$\overline{T_j(s_j)p_j}\to T(s)p$ strongly in $X$ as claimed.
\end{proof}
\begin{remark}
See \cite{goldsteinwacker} for a discussion on solving the wave equation in $\R^n$ in a different energy space.
\end{remark}

\section{Kirchhoff's formula and smoothing via averaging}
\label{kirchhoff1}
We return to Kirchhoff's formula for a $C^2$   solution of the wave equation for $n=3$ and initial data $u(\cdot,0)=u_0$, $u_t(\cdot,0)=v_0 $, namely
\be
\label{kh}u(x,t)=\frac{1}{4\pi t}\int_{S(x,t)}v_0(y)\,dS_y+\frac{\partial}{\partial t}\left(\frac{1}{4\pi t}\int_{S(x,t)}u_0(y)\,dS_y\right).
\ee
For simplicity, and bearing in mind Remark \ref{zerou0}, we suppose that $u_0=0$,  so that 
\begin{align}
\nonumber
u(x,t)&=\frac{1}{4\pi t}\int_{S(x,t)}v_0(y)\,dS_y\\
\label{wavesol} 
&=\frac{t}{4\pi}\int_{S^2}v_0(x+tz)\,dS_z
\end{align}
Hence, formally we also have
\begin{align}
\nonumber
u_t(x,t)&=\frac{1}{t}u(x,t)+\frac{t}{4\pi}\int_{S^2}\nabla v_0(x+tz)\cdot z\,dS_z\\
&=\frac{1}{t}u(x,t)+\frac{t^2}{4\pi}\Delta_x\int_{B}v_0(x+tz)\,dz,\label{wavesol1}
\end{align}
where $B=B(0,1)$.

The representations \eqref{wavesol}, \eqref{wavesol1} are at first sight puzzling, because in view of Theorem \ref{waveexistence} we expect them to be meaningful when we just have $v_0\in L^2(\om)$. However they can be understood because of the {\it smoothing properties of averaging over spheres and balls} that are well known in harmonic analysis. Indeed Stein \cite{stein76} proved in particular that if we define the spherical mean
\be
\label{sphermean}
{\mathcal M}_t(f)(x)=\int_{S^2}f(x+tz)\,dS_z,
\ee
then there is a constant $C>0$, independent of $t$, such that 
\be
\label{sm1}
\|{\mathcal M}_t(f)\|_2\leq C\|f\|_2
\ee
for all $f\in C_0^\infty(\R^3)$ and $t>0$. Approximating $v_0$ in $L^2(\R^3)$ by functions $f^{(j)}\in C_0^\infty(\R^3)$ and applying \eqref{sm1} with $f=f^{(j)}-f^{(k)}$ we find that ${\mathcal M}_t(f^{(j)})$ is a Cauchy sequence in $L^2(\R^3)$, so that the integral in \eqref{wavesol} can be defined for each $t$ as an element of $L^2(\R^3)$. Furthermore we can estimate derivatives of $M_t(f)$, as is done in  \cite{zhaofan19}. Thus we find that \be
\label{sm2}
\|\frac{\partial}{\partial x_i} {\mathcal M}_t(f)\|_2\leq C_1t^{-1}\|f\|_2
\ee
 for some constant $C_1$ and  $i=1,2,3$. In \cite{zhaofan19} it is also shown that  for $B=\{x\in \R^3:|x|<1\}$ the ball  average  
\be
\label{sm3}
{\mathcal N}_t(f)(x):=\int_Bf(x+tz)\,dz,
\ee
satisfies the estimates
\be
\label{Nest}
\|{\mathcal N}_t(f)\|_2\leq C_2\|f\|_2,\;\|\frac{\partial}{\partial x_i}{\mathcal N}_t(f)\|_2\leq C_2t^{-1}\|f\|_2,\;\|\Delta {\mathcal N}_t(f)\|_2\leq C_2t^{-2}\|f\|_2,
\ee
for some constant $C_2$.

In fact the derivation of Kirchhoff's solution combined with Theorem \ref{waveexistence} gives an alternative proof of the estimates \eqref{sm1},\eqref{sm2},\eqref{Nest}. It suffices to take $\varphi^{(j)}\in C_0^\infty(\R^n)$ with $\varphi^{(j)}\to v_0$ in $L^2(\R^3)$. Then the solution $u^{(j)}$ with $u^{(j)}(\cdot,0)=0,\,u_t^{(j)}(\cdot,0)=\varphi^{(j)}$ is smooth and given by
\be
\label{solj}
u^{(j)}(x,t)=\frac{1}{4\pi t}\int_{S(x,t)}\varphi^{(j)}(y)\,dS_y,
\ee
and satisfies 
\be
\label{solj1}
u^{(j)}_t(x,t)=\frac{1}{t}u^{(j)}(x,t)+\frac{t^2}{4\pi}\Delta_x\int_B\varphi^{(j)}(x+tx)\,dz.
\ee
By Theorem \ref{waveexistence} we have that $u^{(j)}\to u$ in $C([0,\tau];H_0^1(\R^3))$ and $u^{(j)}_t\to u_t$ in $C([0,\tau];L^2(\R^3))$ for any $\tau>0$, where $u$ is the unique weak solution with initial data $u(\cdot,0)=0, u_t(\cdot,0)=v_0$ , and thus is  given by \eqref{wavesol}. Hence, setting $f=v_0$, we obtain  \eqref{sm1}     from \eqref{ubound}, and \eqref{sm2}   from \eqref{energy}. The first estimate in \eqref{Nest} is immediate since 
$$\|{\mathcal N}_t(f)\|_2^2\leq \int_B\left(\int_B1^2dz \int_{\R^3}f(x+tz)^2dz\right)dx.$$
 By \eqref{solj1},  \eqref{ubound}, \eqref{energy}  we have that $t^2\|\Delta{\mathcal N}_t(\varphi^{(j)})\|_2\leq M<\infty$, from which we get the third estimate. The middle estimate then follows using the relation $(g,\Delta g)=-\|\nabla g\|_2^2$.

As an illustration of the harmonic analysis methods used to derive estimates such as \eqref{sm1},\eqref{sm2},\eqref{Nest} we prove the following result.
\begin{thm}
\label{ballthm}
 Let $f\in L^2(\R^n)$. Then for $B=\{x\in \R^n:|x|<1\}$  the average ${\mathcal N}_t(f)$  defined for $t>0$ by
\be
\label{ballmean}
{\mathcal N}_t(f)(x)=\int_Bf(x+tz)\,dz
\ee
belongs to $H^\frac{n+1}{2}:=H^\frac{n+1}{2}(\R^n)$ with
\be
\label{ballmean1}
\|{\mathcal N}_t(f)\|_{H^\frac{n+1}{2}}\leq C\max(1,t^{-\frac{n+1}{2}})\|f\|_2,
\ee
for some constant $C>0$ independent of $t$.
\end{thm}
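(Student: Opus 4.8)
The plan is to diagonalize ${\mathcal N}_t$ via the Fourier transform. For $f\in C_0^\infty(\R^n)$, Fubini's theorem and the substitution $y=x+tz$ give, with the convention $\hat g(\xi)=(2\pi)^{-n/2}\int_{\R^n}g(x)e^{-ix\cdot\xi}\,dx$ used above,
\[
\widehat{{\mathcal N}_t(f)}(\xi)=m(t\xi)\,\hat f(\xi),\qquad m(\eta):=\int_B e^{i\eta\cdot z}\,dz,
\]
so that ${\mathcal N}_t$ is the Fourier multiplier operator with symbol $m(t\,\cdot)$. Since $|m(\eta)|\le|B|$, this multiplier is bounded on $L^2$, and as ${\mathcal N}_t$ is itself bounded on $L^2$ (the first estimate in \eqref{Nest}), the identity extends to all $f\in L^2(\R^n)$ by density. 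By Plancherel's theorem the claim \eqref{ballmean1} is then equivalent to the pointwise symbol bound
\[
\sup_{\xi\in\R^n}\,(1+|\xi|^2)^{\frac{n+1}{2}}\,|m(t\xi)|^2\le C^2\max(1,t^{-(n+1)}).
\]

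The key step is the decay of $m$. The function $m$ is radial and is, up to a constant, the Fourier transform of the indicator of the unit ball; explicitly $m(\eta)=(2\pi)^{n/2}|\eta|^{-n/2}J_{n/2}(|\eta|)$. The classical asymptotics $J_\nu(r)=O(r^{-1/2})$ as $r\to\infty$ therefore give $|m(\eta)|\le C|\eta|^{-\frac{n+1}{2}}$ for $|\eta|\ge1$, and combining this with the trivial bound $|m(\eta)|\le|B|$ yields
\[
|m(\eta)|\le C_0\,(1+|\eta|)^{-\frac{n+1}{2}}\qquad\text{for all }\eta\in\R^n.
\]
The exponent $\frac{n+1}{2}$, dictated by the nonvanishing Gaussian curvature of the sphere, is precisely what produces the gain of $\frac{n+1}{2}$ derivatives; I would note that one could equally obtain this decay by stationary phase rather than by invoking the explicit Bessel identity.

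Finally, inserting this bound and using $(1+|\xi|^2)^{1/2}\le1+|\xi|$ reduces the symbol estimate to the elementary one-variable inequality
\[
(1+|\xi|^2)^{\frac{n+1}{2}}|m(t\xi)|^2\le C_0^2\Big(\frac{1+|\xi|}{1+t|\xi|}\Big)^{n+1}\le C_0^2\max(1,t^{-(n+1)}),
\]
the last step following because $g(r)=\frac{1+r}{1+tr}$ is monotone with $\sup_{r\ge0}g(r)=\max(1,t^{-1})$. Feeding this into $\|{\mathcal N}_t(f)\|_{H^{(n+1)/2}}^2=\int(1+|\xi|^2)^{\frac{n+1}{2}}|m(t\xi)|^2|\hat f(\xi)|^2\,d\xi$ and applying Plancherel gives $\|{\mathcal N}_t(f)\|_{H^{(n+1)/2}}\le C_0\max(1,t^{-(n+1)/2})\|f\|_2$, which is \eqref{ballmean1}. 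The only genuine obstacle is the sharp decay estimate for $m$; once that is in hand, the remainder is bookkeeping with Plancherel's theorem and a calculus supremum.
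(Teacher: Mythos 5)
Your proof is correct and follows essentially the same route as the paper: both arguments identify ${\mathcal N}_t$ as convolution with (a dilate of) $\chi_B$ and rest entirely on the decay $|\hat\chi_B(\xi)|\leq C(1+|\xi|)^{-\frac{n+1}{2}}$, differing only in that you bound the symbol $m(t\xi)$ directly (justifying the decay via Bessel asymptotics) while the paper proves the $t=1$ case and then rescales.
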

\begin{proof}
We use the fact (see \cite[p175]{grafakos08}, \cite[pp 605-606]{makarov13}, \cite[p 338]{stein93})  that the Fourier transform $\hat \chi_B$ of the characteristic function $\chi_B$ of the unit ball $B$ satisfies
\be
\label{fourierest}\hat\chi_B(\xi)\leq C_n(1+|\xi|)^{-\frac{n+1}{2}}
\ee
 for some positive constant $C_n$.
We note that $h:={\mathcal N}_1(f)$ satisfies $h(x)=(\chi_B*f)(x)$, so that
$\hat h(\xi)=(2\pi)^\frac{n}{2} \hat\chi_B(\xi)\hat f(\xi),$ and thus by \eqref{fourierest}
\be
\label{fourier1}
(1+|\xi|^\frac{n+1}{2}) |\hat h(\xi)|\leq C|\hat f(\xi)|.
\ee
Hence $\|h\|_{H^\frac{n+1}{2}}=\|(1+|\xi|^\frac{n+1}{2}) \hat h\|_2\leq C\|\hat f\|_2=C\|f\|_2$, and hence \eqref{ballmean1} holds for $t=1$.  

Set $f_t(x)=f(tx)$. Since ${\mathcal N}_t(f)(tx)={\mathcal N}_1(f_t)(x)$ we have $\widehat {{\mathcal N}_t(f)}(\xi)=t^n\widehat{{\mathcal N}_1(f_t)}(t\xi)$, so that by \eqref{fourier1}
\begin{align}
(1+|\xi|^\frac{n+1}{2})|\widehat{{\mathcal N}_t(f)}|(\xi)&\leq C\left(\frac{1+|\xi|^\frac{n+1}{2}}{1+|t\xi|^\frac{n+1}{2}}\right)t^n|\widehat{f_t}(t\xi)|\nonumber\\
&\leq C\max(1,t^{-\frac{n+1}{2}})|\hat f(\xi)|,\label{fourier2}
\end{align}
giving \eqref{ballmean1} for any $t$.
\end{proof}
\begin{remark}Because of results of Hlawka \cite{hlawka50}, Herz \cite{herz} the same result holds if $B$ is replaced by a bounded convex set $C\subset\R^n$ with sufficiently smooth boundary having everywhere positive Gaussian curvature. However if $B$ is replaced by the  cube $Q=(-1,1)^n$ then ${\mathcal N}^Q_1(f)(x):=\int_Q f(x+z)\,dz$ has less regularity than ${\mathcal N}_1(f)$.
In fact
\begin{align*}
\hat\chi_Q(\xi)&=\frac{1}{(2\pi)^\frac{n}{2}}\int_Qe^{-i\xi\cdot x}\,dx\\
&=\frac{2^n}{(2\pi)^\frac{n}{2}}\prod_{j=1}^n\frac{\sin \xi_j}{\xi_j}.
\end{align*}
Hence if $\alpha\geq 0$ then $(1+|\xi|^\alpha)\hat\chi_Q(\xi)\in L^\infty(\R^n)$ iff $\alpha\leq 1$. Hence ${\mathcal N}^Q_1(f)\in H^1(\R^n)$ but in general ${\mathcal N}^Q_1(f)\not\in H^\alpha(\R^n)$ for $\alpha>1$.
\end{remark}

\noindent{\bf Acknowledgement.}  
This paper was completed while visiting the Hong Kong Institute for Advanced Study as a Senior Fellow. I am grateful to the referee whose comments led to improvements to Section 3.
\bibliographystyle{abbrv}
\bibliography{balljourn,ballconfproc,gen2}
\end{document}